\documentclass{amsart}
\usepackage{amssymb}
\usepackage[hidelinks]{hyperref}
\numberwithin{equation}{section}
\newtheorem{theorem}{Theorem}[section]
\newtheorem{proposition}[theorem]{Proposition}
\newtheorem{lemma}[theorem]{Lemma}
\newtheorem{corollary}[theorem]{Corollary}
\newtheorem{conjecture}[theorem]{Conjecture}
\theoremstyle{definition}
\newtheorem{definition}[theorem]{Definition}
\theoremstyle{remark}
\newtheorem{remark}[theorem]{Remark}
\newcommand{\Q}{\mathbf Q}
\newcommand{\Z}{\mathbf Z}
\newcommand{\F}{\mathbf F}

\newcommand{\GL}{\operatorname{GL}}
\newcommand{\SL}{\operatorname{SL}}
\newcommand{\PGL}{\operatorname{PGL}}
\newcommand{\Gal}{\operatorname{Gal}}
\newcommand{\Cl}{\operatorname{Cl}}
\newcommand{\tr}{\operatorname{tr}}
\newcommand{\detm}{\operatorname{det}}

\newcommand{\ssem}{\mathrm{ss}}
\newcommand{\ab}{\mathrm{ab}}

\newcommand{\one}{\mathbf 1}
\newcommand{\GG}{\mathcal G}

\title[Unramified Fontaine-Mazur Conjecture for $\GL_2$]
{On Boston's Unramified Conjecture for $\GL_2$ and McLeman's $(3,3)$-Conjecture}
\author{Yufan Luo}
\address{Shanghai Institute for Mathematics and Interdisciplinary Sciences (SIMIS), Shanghai 200433, China}
\address{Research Institute of Intelligent Complex Systems, Fudan University, Shanghai 200433, China}
\email{yufanluo@hotmail.com}
\date{\today}
\subjclass[2020]{Primary 11F80; Secondary 11R37}
\keywords{Fontaine--Mazur conjecture, Galois representations, class field towers, conjugate self-dual representations, McLeman's conjecture}

\begin{document}
\begin{abstract}
	Let $p$ be an odd prime number. Based on recent work of Zhang, we prove that for a finite set $S$ of primes of $\Q$ containing $\infty$ but not $p$, any continuous odd representation $G_{\Q,S} \to \GL_2(A)$ over a complete Noetherian local ring $A$ with finite residue field of characteristic $p$ has finite image, where $G_{\Q,S}$ denotes the Galois group of the maximal extension of $\Q$ unramified outside $S$. This proves the two-dimensional odd case of Boston's strengthening of the unramified Fontaine--Mazur conjecture over $\Q$. Furthermore, we show that for an imaginary quadratic field $K$, any continuous conjugate self-dual two-dimensional $p$-adic representation of its pro-$p$ Galois group $G_{K,S}(p)$ has finite image, provided the primes in $S$ satisfy a modest condition. As an application, we resolve the sufficiency direction of McLeman's $(3,3)$-conjecture on $p$-class field towers for $p>3$. Namely, we prove that if $K$ is an imaginary quadratic field with $p$-class rank two and the Galois group $G_{K,\varnothing}(p)$ of the maximal unramified $p$-extension of $K$ has Zassenhaus type $(3,3)$, then the $p$-class field tower of $K$ is finite. For $p=3$, the results of Ahlqvist and Pink give the same finiteness conclusion in ten of the thirteen possible cases for the fourth Zassenhaus quotient.
\end{abstract}

\maketitle
\tableofcontents

\section{Introduction}

\subsection{Odd unramified representations over \texorpdfstring{$\Q$}{Q}}
Let $p$ be a prime number. For a number field $K$, let $G_K$
denote its absolute Galois group and let $S_\infty(K)$ denote the
set of infinite primes of $K$. For a finite set
$S$ of primes of $K$, let $K_S$
(resp. $K_S(p)$) be the maximal algebraic extension
(resp. $p$-extension) of $K$ unramified outside $S$. We put
$G_{K,S}:=\Gal(K_S/K)$ and $G_{K,S}(p):=\Gal(K_S(p)/K)$.
Boston's
strengthening \cite[Conjecture~2]{Boston99} of the unramified
Fontaine--Mazur conjecture \cite[Conjecture~5a]{FM95} is the following.

\begin{conjecture}\label{conj:boston}
Let $K$ be a number field, let $S$ be a finite set of primes of $K$
containing $S_\infty(K)$ and no prime above $p$, and let
$n\geq1$. Then every continuous homomorphism
\[
  G_{K,S}\longrightarrow\GL_n(A)
\]
has finite image where $A$ is a complete Noetherian commutative local
ring with finite residue field of characteristic $p$.
\end{conjecture}

For $n=2$ and $p>2$, Allen and Calegari
\cite[Corollary~3]{AllenCalegari14} proved
Conjecture~\ref{conj:boston} over totally real number fields $K$ for totally odd representations satisfying some residual-image
hypotheses. Over $\Q$, Calegari and Geraghty
\cite[Corollary~1.6]{CalegariGeraghty18} proved it for minimal lifts
of odd irreducible residual representations. Also, Calegari
\cite[Theorem~1.1]{Calegari18} proved it for odd $p$-adic representations with absolutely irreducible reduction that are locally reducible at every ramified prime. See also \cite[Remark 1.2.3]{Zhang26}.

In this paper, our first result proves the conjecture for $K=\Q$, $n=2$, and
$p>2$ for odd representations, without residual irreducibility or
adequacy hypotheses and without a minimality condition. Here we say
that a continuous representation
$\rho:G_{\Q,S}\to\GL_2(A)$ is \emph{odd} if $\detm\rho(c)=-1$, where $c$ denotes the image in $G_{\Q,S}$ of a complex
conjugation in $G_\Q$.

\begin{theorem}\label{thm:combined-intro}
Let $p$ be an odd prime and let $S$ be a finite set of primes of $\Q$ containing $\infty$ but not $p$. Then every continuous odd
representation $G_{\Q,S}\to\GL_2(A)$ has finite image, where $A$ is a complete Noetherian commutative local ring with finite residue field of characteristic $p$.
\end{theorem}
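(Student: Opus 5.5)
We reduce to the case of $p$-adic representations over rings of integers of finite extensions of $\Q_p$, and then appeal to Zhang's result \cite{Zhang26}. Zhang proves the unramified Fontaine--Mazur conjecture for odd two-dimensional $p$-adic representations of $G_{\Q,S}$ ($p$ odd, $p\notin S$), presumably via modularity of the residual representation and a patching or big-$R=\mathbf T$ argument.

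\emph{Step 1: Reduction to domains of dimension $1$.} Let $\rho:G_{\Q,S}\to\GL_2(A)$ be odd. The group $G_{\Q,S}$ satisfies Mazur's $p$-finiteness condition because $S$ is finite, so the image of $\rho$ is topologically finitely generated. We may therefore replace $A$ by the closed subring generated by $W(k)$ and the traces and matrix entries, keeping $A$ complete, Noetherian and local. The image is finite if and only if its image in $\GL_2(A/\mathfrak q)$ is finite for each of the finitely many minimal primes $\mathfrak q$, together with a bound on the nilpotent part. The nilpotent part is handled as follows: $\ker(\GL_2(A)\to\GL_2(A_{\mathrm{red}}))$ is a nilpotent pro-$p$ group. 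Finiteness of the reduced image then gives a finite extension $L/\Q$ unramified outside $S$. The residual kernel is a pro-$p$ quotient of $G_{L,S}(p)$ with finite abelianization (class field theory, $p\notin S$), and it is nilpotent, hence finite.

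So we may assume $A$ is a complete Noetherian local domain. If $\dim A=0$, then $A$ is finite and there is nothing to prove. If the image were infinite, we would like to find a prime $\mathfrak p$ of $A$ with $\dim A/\mathfrak p=1$ and $A/\mathfrak p$ of characteristic $0$ such that the specialization still has infinite image. Its normalization embeds in $\mathcal O_E$ for some finite $E/\Q_p$, giving an odd $p$-adic representation with infinite image, which contradicts Zhang. Oddness survives specialization, since $\det\rho(c)=-1$ is a ring identity.

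\emph{Main obstacle: the specialization step.} This includes equicharacteristic $p$ rings such as $\F_q[[T]]$, where no characteristic-$0$ specialization exists. For the image to be infinite yet every specialization finite, the following must hold. The images in $\GL_2(A/\mathfrak p)$ are finite for all height-one $\mathfrak p$ with $\dim A/\mathfrak p=1$ and characteristic $0$. Hence all traces $\tr\rho(g)$ lie in $W(k)[\zeta]$-valued loci, and by a density argument (Jacobson / Hilbert-irreducibility-type over complete local rings) the traces are "constant". Concretely, I would show that the pseudo-character $\tr\rho$ takes values in a finite set. The semisimplification then factors through a finite quotient, so the image is an extension of a finite group by a pro-$p$ unipotent-type group. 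That case is then killed by the class-field-theory argument above: in the reducible-with-finite-semisimplification case, the kernel is a pro-$p$ group whose abelianization is bounded by ray class groups of a finite extension unramified at $p$, and it is solvable of bounded derived length (upper-triangular).

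For characteristic $p$ domains, I would instead try to lift. The pseudo-character gives a point of the universal odd pseudodeformation ring $R^{\mathrm{ps}}$ of $\bar\rho^{\ssem}$ unramified outside $S$. If Zhang's work (or the big $R=\mathbf T$ input behind it) shows that this ring is finite over $W(k)$, then every characteristic-$p$ point has finite trace values automatically. I expect this to be exactly what Zhang's theorem provides: that $R^{\mathrm{ps}}_{S}$ is finite over $\Z_p$, even Artinian after inverting... Invoking that finiteness, every trace function lands in a finite ring, and the previous paragraph finishes the argument. Verifying that the cited result gives finiteness of the full odd pseudodeformation ring, including the residually reducible case, is where I expect the real work.
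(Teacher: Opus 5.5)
Your reduction to domains, with the nilpotent kernel killed by class field theory, is sound, and so is your characteristic-$p$ branch. The latter is exactly the paper's mechanism. Theorems~\ref{thm:zhang-unramified-deformations} and~\ref{thm:zhang-unramified-pseudodeformations} give finiteness over $W(k)$ of the deformation ring of an absolutely irreducible $\bar\rho$ and of the pseudodeformation ring of $1\oplus\bar\chi$, both with Teichm\"uller determinant. That determinant condition is automatic in characteristic $p$, since torsion units of a characteristic-$p$ local domain lie in the residue field; in the reducible case one twists first. So the trace factors through the finite ring $R/pR$, and Lemmas~\ref{lem:finite-traces} and~\ref{lem:arithmetic-finiteness} finish.

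The genuine gap is the mixed-characteristic branch, which you leave as a sketch. The relevant primes are those $\mathfrak p$ with $A/\mathfrak p$ one-dimensional of characteristic $0$, i.e.\ the maximal ideals of $A[1/p]$; ``height one'' is the wrong condition once $\dim A\geq3$. Finite image of each $\rho_{\mathfrak p}$ only says that $\tr\rho(g)\bmod\mathfrak p$ is a sum of two roots of unity whose orders may depend on $\mathfrak p$. A Jacobson-density argument needs a single monic $P\in\Z[X]$ with $P(\tr\rho(g))\in\mathfrak p$ for all such $\mathfrak p$. That requires a bound on $|\rho_{\mathfrak p}(G_{\Q,S})|$ independent of $\mathfrak p$, which your ``$W(k)[\zeta]$-valued loci'' silently assumes. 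The bound is true but needs proof. Let $L$ be the fixed field of $\rho^{-1}(1+M_2(\mathfrak m_A))$. Then each $\rho_{\mathfrak p}(G_L)$ is a finite $p$-group with a faithful two-dimensional representation in characteristic $0$. Since $p$ is odd, it is abelian, hence a quotient of the finite group $G_{L,S}(p)^{\ab}$. So $|\rho_{\mathfrak p}(G_{\Q,S})|\leq|\GL_2(k)|\cdot|G_{L,S}(p)^{\ab}|$. With this, $P(\tr\rho(g))$ lies in every maximal ideal of the Jacobson ring $A[1/p]$, hence is $0$, and the domain $A$ allows only finitely many trace values. Alternatively, use the finiteness of $R$ over $W(k)$ in characteristic $0$ as well: the trace then lands in a domain finite over $W(k)$, so it is the trace of an odd semisimple $p$-adic representation, and no density is needed.

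The paper avoids this uniformity issue altogether. Proposition~\ref{prop:odd-local-fields} proves finiteness for every specialization $A\to\mathcal O_F$ with $F$ a local field of either characteristic. Lemma~\ref{lem:fixed-specialization} (from \cite{Luo26}) turns this into a virtually nilpotent image, which Lemma~\ref{lem:arithmetic-finiteness} shows is finite; no reduction to domains or density argument is needed.

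Your characteristic-$0$ black box is also misattributed. Zhang's theorem (Theorem~\ref{thm:zhang-full}) only covers irreducible $\rho$ whose residual semisimplification is a sum of two distinct characters. Its hypotheses follow from oddness (distinct residual characters) and from unramifiedness at $p$ (Hodge--Tate weights $\{0,0\}$ and $\rho|_{G_{\Q_p}}$ reducible). The residually absolutely irreducible case comes from Pilloni--Stroh \cite{PilloniStroh16}. Their residual hypothesis is verified using $\bar\rho(G_{\Q(\zeta_p)})=\bar\rho(G_\Q)$ (Lemma~\ref{lem:cyclotomic-image}) and, at $p=5$, the fact that $[\Q(\zeta_5):\Q]=4$. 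Representations $\rho$ that are not absolutely irreducible are disposed of by Lemma~\ref{lem:quadratic-solvable}.
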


Our proof relies on the recent work \cite{Zhang26} of Zhang. The proof first treats representations over local fields. In characteristic zero, Zhang's weight-one theorem \cite[Theorem~5.4.1]{Zhang26} handles the residually reducible case,
while Pilloni--Stroh \cite[Theorem~0.2 and Corollary~2.2.3]{PilloniStroh16}
handle the residually absolutely irreducible case.  In characteristic
$p$, Zhang's unramified finiteness results
(Theorems~\ref{thm:zhang-unramified-pseudodeformations}
and~\ref{thm:zhang-unramified-deformations}) suffice.  We then pass to
complete local coefficient rings by the specialization argument in \cite[proof of Theorem~3.9]{Luo26}.

\subsection{Conjugate self-dual representations over imaginary quadratic fields} 
Let $K/\Q$ be imaginary quadratic, write $\operatorname{disc}(K)$
for its field discriminant and $\Cl(K)$ for its ideal class group,
and put
$d_p\Cl(K)=\dim_{\F_p}\Cl(K)/p\Cl(K)$. Fix a complex conjugation $c\in G_\Q$. For a finite set $S$ of primes
of $K$ containing $S_\infty(K)$ and no prime above $p$, define
\[
 T_K(S):=\{\ell\text{ rational prime}:\text{ every prime of }K
                       \text{ above }\ell\text{ belongs to }S\}.
\]
Let $q_S:G_K\twoheadrightarrow G_{K,S}(p)$ be the quotient map.
For a continuous representation
$\rho:G_{K,S}(p)\to\GL_2(\overline\Q_p)$, write
$\rho_K=\rho\circ q_S$ for its inflation to $G_K$.
We say that $\rho$ is \emph{conjugate self-dual} if
\[
 \rho_K^c\simeq\rho_K^\vee\quad\text{over }\overline\Q_p,
\]
where
\[
 \rho_K^c(g)=\rho_K(cgc^{-1}),\qquad
 \rho_K^\vee(g)={}^t\!\bigl(\rho_K(g)^{-1}\bigr)
 \quad(g\in G_K).
\]
Here ${}^tM$ denotes matrix transpose. Our second main result is the following.

\begin{theorem}
\label{thm:quadratic-padic}
Let $K/\Q$ be imaginary quadratic, let $p>2$, and let $S$ be a
finite set of primes of $K$ containing $S_\infty(K)$ and no prime above $p$. Suppose that
\[
 \#\{\ell\in T_K(S):\ell\equiv1\pmod p\}\leq1.
\]
Then every continuous conjugate self-dual representation
$\rho:G_{K,S}(p)\to\GL_2(\overline\Q_p)$ has finite image.
\end{theorem}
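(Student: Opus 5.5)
Since $\rho$ factors through a pro-$p$ group, after conjugation its image lies in $\GL_2(\mathcal O)$ for the ring of integers $\mathcal O$ of a finite extension of $\Q_p$, and its reduction $\bar\rho$ has $p$-group image. Hence $\bar\rho^{\ssem}$ is trivial up to twist, and we may take $\bar\rho^{\ssem}=\one\oplus\one$. The strategy is to induce to $G_\Q$, or better to descend using conjugate self-duality, and so reduce to Theorem~\ref{thm:combined-intro} or to Zhang's weight-one results.

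The steps, in order, are as follows.

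\textbf{Step 1 (determinant).} The character $\det\rho$ is a $p$-adic character of $G_{K,S}(p)^{\ab}$. By class field theory this group is finite, since $S$ contains no prime above $p$. So $\det\rho$ has finite order. Twisting by a suitable root of $\det\rho$ (possible since $p$ is odd and the order is a $p$-power), we may assume $\det\rho=\one$. Then $\rho_K^\vee\simeq\rho_K$ because $\rho_K$ is $2$-dimensional, so the hypothesis becomes $\rho_K^c\simeq\rho_K$.

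\textbf{Step 2 (descent to $\Q$).} If $\rho$ is absolutely irreducible, then $\rho_K^c\simeq\rho_K$ together with $H^2(\Z/2,\overline\Q_p^\times)$ being controlled (the obstruction is $2$-torsion while everything is pro-$p$) lets $\rho_K$ extend to $r:G_\Q\to\GL_2(\overline\Q_p)$. One can choose the extension so that $\det r(c)=-1$, since the two extensions differ by the quadratic character of $K$. The representation $r$ is unramified outside $T_K(S)\cup\{\text{primes ramified in }K\}$ and $\infty$, and at primes split in $K$ the ramification is as for $\rho$. The primes of $K$ not above $T_K(S)$ are unramified, but a prime $\ell$ split in $K$ with only one factor in $S$ breaks this. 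I would first reduce to the case where $S$ is stable under $c$, replacing $S$ by $S\cup cS$ at the cost of changing $T_K(S)$. This is where the hypothesis enters. If $p\mid\operatorname{disc}(K)$, then $p$ ramifies in $r$, and Theorem~\ref{thm:combined-intro} does not apply directly. In that case I would instead apply Zhang's unramified finiteness over $\Q$ to $r$ restricted suitably, or work with the induced representation $\operatorname{Ind}_K^\Q\rho_K$, which is $4$-dimensional and decomposes. If $p\nmid\operatorname{disc}(K)$, then $r$ is odd and unramified at $p$, and Theorem~\ref{thm:combined-intro} gives finite image, hence $\rho$ has finite image.

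\textbf{Step 3 (reducible case).} If $\rho$ is reducible, it is an extension of characters that are finite by Step 1, so a unipotent part remains. The image is finite if and only if the extension class, which is nontrivial in $H^1(G_{K,S}(p),\overline\Q_p(\chi))$, vanishes after restriction to an open subgroup. Here I would use $\rho^c\simeq\rho^\vee$ to place the class in the $c$-eigenspace of the $H^1$ of a finite extension. Leopoldt/Brumer-type finiteness of $\Hom(G_{L,S}(p),\Q_p)$ then forces the extension to be trivial up to finite image.

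\textbf{Main obstacle.} I expect the main obstacle to be Step 2, at the primes $\ell\equiv1\pmod p$ in $T_K(S)$. Tame ramification in a pro-$p$ quotient is only possible at primes whose norm is $\equiv1\pmod p$. A descended $r$ ramified at two such $\ell$ could be a genuinely infinite-image odd representation: by Theorem~\ref{thm:combined-intro} it cannot be, but descending requires self-duality compatible with the inertia structure. The hypothesis that there is at most one such $\ell$ should be used to show that the residual Borel or unipotent structure is forced to be minimal, so that Zhang's residually reducible weight-one theorem applies. I would first try to prove that, with at most one such $\ell$, the descended $r$ has unipotent tame inertia at only one prime, and then invoke Zhang's result directly.
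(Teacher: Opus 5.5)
\textbf{The main gap is in Step~2.} You claim the extension $r$ of $\rho_K$ to $G_\Q$ can be chosen with $\det r(c)=-1$ because the two extensions differ by $\varepsilon_K$. That is false in dimension two: $\det(r\otimes\varepsilon_K)=\varepsilon_K^2\det r=\det r$.

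Concretely, normalize the intertwiner to an involution $A$ with $A\rho_K(g)A^{-1}=\rho_K(cgc^{-1})$ and $A^2=I$. The only extensions send $c$ to $A$ or to $-A$, and $\det(-A)=\det A$. So the parity is forced:
\begin{itemize}
\item $r$ is odd exactly when $A$ is non-scalar;
\item when $A=\pm I$, i.e.\ $\rho_K(cgc^{-1})=\rho_K(g)$ on the nose, every extension is even.
\end{itemize}
In the even case neither Theorem~\ref{thm:combined-intro} nor Zhang's theorem applies; one would need the even case of Boston's conjecture. Your proposal has no argument for this case, and it is precisely where the hypothesis on $T_K(S)$ is used.

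The paper handles it as follows. Take $r(c)=I$, so $r$ has pro-$p$ image. For $\ell\notin T_K(S)$, $\rho_K$ is unramified at every prime of $K$ above $\ell$ (see below). Hence $r(I_\ell)$ has order at most $2$, and so it is trivial. Thus $r$ factors through $G_{\Q,T_K(S)\cup\{\infty\}}(p)$, which is finite cyclic when at most one $\ell\in T_K(S)$ is $\equiv1\pmod p$. Your guess that the hypothesis makes tame ramification ``minimal'' so that Zhang's theorem applies is not how it enters. In the odd case, Theorem~\ref{thm:zhang-full} applies with no condition on $S$.

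\textbf{Secondary points.}
\begin{itemize}
\item Enlarging $S$ to $S\cup cS$ goes the wrong way and can destroy the hypothesis. Instead, $\rho_K^c\simeq\rho_K^\vee$ forces $\ker\rho_K$ to be $c$-stable, since a representation and its dual have the same kernel. So $\rho_K$ is already unramified at all primes of $K$ over any $\ell\notin T_K(S)$. You can \emph{shrink} $S$ to a $c$-stable set with the same $T_K(S)$.
\item In the odd case you do not need $p\nmid\operatorname{disc}(K)$, nor the induced representation. Since no prime above $p$ lies in $S$, $r(I_p)$ has order at most $2$. Hence $r|_{G_{\Q_p}}$ is Hodge--Tate with weights $\{0,0\}$. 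It is also reducible: an inertia image of order at most $2$ is central in the local image, so either it has two invariant eigenlines or it is scalar and a Frobenius eigenline is invariant. Moreover $\bar r^{\ssem}=\one\oplus\bar\varepsilon_K$, because the residual image of $G_K$ is a normal $p$-subgroup and $\det r(c)=-1$. Theorem~\ref{thm:zhang-full} then applies directly.
\item In Step~1, take the square root $\eta=\delta^{(N+1)/2}$ of $\delta=\det\rho$. It is a power of $\delta$, so $\eta_K^c=\eta_K^{-1}$ and the twist stays conjugate self-dual.
\item Step~3 is fine in substance. The needed input is only that open subgroups of $G_{K,S}(p)$ have finite abelianization, by ray class groups of modulus prime to $p$; nothing of Leopoldt type is involved. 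Alternatively, use Lemma~\ref{lem:arithmetic-finiteness} with virtually solvable image.
\end{itemize}
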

\begin{corollary}\label{cor:rank2-padic}
Let $p>2$ be a prime and let $K/\Q$ be imaginary quadratic
with $d_p\Cl(K)=2$. Then any continuous representation $ \rho:G_{K,\varnothing}(p)\longrightarrow\GL_2(\overline{\Q}_{p})$ has finite image.
\end{corollary}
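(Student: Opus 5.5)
The plan is to apply Theorem~\ref{thm:quadratic-padic} with $S=S_\infty(K)$. Then $T_K(S)=\varnothing$, so the hypothesis on primes $\ell\equiv1\pmod p$ holds trivially, and $G_{K,S}(p)=G_{K,\varnothing}(p)=:G$. What remains is to reduce an arbitrary continuous $\rho:G\to\GL_2(\overline\Q_p)$ to a conjugate self-dual one. Two standard facts about $G$ will be used. First, every open subgroup $U\subseteq G$ has finite abelianization, since $U^{\ab}$ is the $p$-part of the class group of the corresponding subfield of the $p$-class field tower. Second, $d(G)=d_p\Cl(K)=2$, and the complex conjugation $c$ acts on $G^{\ab}\cong\Cl(K)[p^\infty]$ by $-1$.

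\emph{Step 1: twist into $\SL_2$.} The character $\detm\rho$ factors through the finite $p$-group $G^{\ab}$, so it has finite $p$-power order. Since $p$ is odd, it has a square root $\chi$ of finite order. Put $\rho'=\rho\otimes\chi^{-1}$. This is $\SL_2$-valued, and $\rho$ has finite image if and only if $\rho'$ does.

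\emph{Step 2: choose $c$-inverted generators.} Since $c$ acts as $-1$ on the Frattini quotient $G/\Phi(G)\cong\F_p^2$, the standard lemma for pro-$p$ groups with an involution (Boston) provides generators $x,y$ of $G$ with $cxc^{-1}=x^{-1}$ and $cyc^{-1}=y^{-1}$. Write $A=\rho'(x)$ and $B=\rho'(y)$. Then $\rho'^c$ sends $(x,y)\mapsto(A^{-1},B^{-1})$, and $\rho'^\vee$ sends $(x,y)\mapsto({}^tA^{-1},{}^tB^{-1})$. For $\SL_2$-valued representations, traces of words determine the semisimplification. Every trace of a word in the images is a polynomial in $\tr A$, $\tr B$ and $\tr AB$. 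Using $\tr(M^{-1})=\tr M$ in $\SL_2$, the two representations agree on these three quantities:
\[
\tr(A^{-1}B^{-1})=\tr\bigl((BA)^{-1}\bigr)=\tr(BA)=\tr({}^tB^{-1}\,{}^tA^{-1})^{t}=\tr({}^tA^{-1}\,{}^tB^{-1}).
\]
Hence $(\rho'^c)^{\ssem}\simeq(\rho'^\vee)^{\ssem}$. If $\rho'$ is irreducible, both sides are irreducible, so $\rho'$ is conjugate self-dual. Theorem~\ref{thm:quadratic-padic} then gives finite image for $\rho'$, and therefore for $\rho=\rho'\otimes\chi$.

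\emph{Step 3: the reducible case.} Here Theorem~\ref{thm:quadratic-padic} is not needed. After conjugation, $\rho'$ is upper triangular with diagonal characters $\psi^{\pm1}$. These factor through $G^{\ab}$ and so have finite order. On the open subgroup $U=\ker\psi$, the representation $\rho'$ is unipotent, and its upper-right entry is a continuous homomorphism $U\to\overline\Q_p$. Such a homomorphism factors through the finite group $U^{\ab}$, so it vanishes. Hence $\rho'(U)$ is trivial and $\rho'$ has finite image.

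The main obstacle is Step~2. We need the generators $x,y$ to be inverted by $c$ exactly, not just modulo $\Phi(G)$. We also need to check that the trace identity really yields an isomorphism $\rho'^c\simeq\rho'^\vee$ over $\overline\Q_p$, not just an equality of semisimplifications. The second point is covered because the irreducible case is separated off and the reducible case is handled directly in Step~3. For the first point, I would try the usual averaging construction: replace a lift $x_0$ of a $(-1)$-eigenvector by $x=x_0\,(cx_0^{-1}c^{-1})$, then take a suitable $\tfrac12$-power, as in Boston's treatment of tame $p$-extensions.
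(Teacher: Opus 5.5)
Your proof is correct, but its middle step differs from the paper's. The paper (Corollary~\ref{cor:quadratic-field-rank2}) never checks conjugate self-duality. After twisting into $\SL_2$, Lemma~\ref{lem:two-generator-involution} takes the inverted generators $x_i=y_i\alpha(y_i)^{-1}$ and puts $X=\rho_0(x_1)$, $Y=\rho_0(x_2)$. Cayley--Hamilton then shows that $B=XY-YX$ is invertible, traceless, and conjugates $X,Y$ to $X^{-1},Y^{-1}$. Rescaling $B$ gives an involution $A$ with $\detm A=-1$, and Proposition~\ref{prop:quadratic-descent} is applied directly.

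You use the same inverted generators, but with the Fricke trace identities, to obtain $(\rho')^c\simeq(\rho')^\vee$. You then invoke Theorem~\ref{thm:quadratic-padic} with $S=S_\infty(K)$. The proof of that theorem recovers an intertwiner by Schur's lemma and must handle two cases. The nonscalar case leads back to Proposition~\ref{prop:quadratic-descent}. The scalar case is trivial here because $G_{\Q,\{\infty\}}(p)=1$. Two small points should be stated explicitly. First, the traces agree on the dense abstract subgroup generated by $x,y$, and hence on $G$ by continuity. Second, the step from traces to semisimplifications uses characteristic $0$.

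What each route buys:
\begin{itemize}
\item The paper's explicit commutator is self-contained. It yields $\detm A=-1$ at once, which shows the scalar case cannot occur.
\item Your route makes the rank-two statement a genuine special case of the conjugate self-dual theorem. This is conceptually economical, at the price of a longer dependency chain.
\item Your Step~3 is a more elementary substitute for Lemma~\ref{lem:quadratic-solvable}. It uses only that open subgroups of $G$ have finite abelianization, not the solvable-image finiteness of Lemma~\ref{lem:arithmetic-finiteness}.
\end{itemize}

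Finally, what you call the main obstacle is not one. The element $x=x_0\,cx_0^{-1}c^{-1}$ already satisfies $cxc^{-1}=x^{-1}$ exactly, since $c^2=1$. Its image in $G/\Phi(G)$ is $2\bar x_0\neq0$ because $p$ is odd. So $x,y$ generate $G$ by the Burnside basis theorem, and no $\tfrac12$-power is needed. This is exactly the construction in Lemma~\ref{lem:two-generator-involution}.
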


The proofs use conjugate self-duality to extend a finite-order
twist of an absolutely irreducible representation to an odd
representation of $G_\Q$, to which Zhang's theorem applies.
For a tower group with two generators, an elementary matrix
argument supplies the required involution.  

\subsection{McLeman's \texorpdfstring{$(3,3)$}{(3,3)}-conjecture}
Recall that the extension $K_{\emptyset}(p)$ is the union of the fields obtained by
iterating the Hilbert $p$-class field construction.  Its finiteness
is therefore equivalent to the termination of the $p$-class field
tower of $K$.  A decisive development in the class field tower
problem was the theorem of Golod and Shafarevich \cite{GS64}, which
established the existence of infinite towers by relating the numbers
of generators and relations of their Galois groups.  By class field
theory, the minimal number of generators of $G_{K,\varnothing}(p)$
is $d=d_p\Cl(K)$.  If $d=0$, then $G_{K,\varnothing}(p)$ is trivial;
if $d=1$, it is finite cyclic.  For larger $d$, the arithmetic of
imaginary quadratic fields imposes additional restrictions on a
minimal pro-$p$ presentation.  Shafarevich's relation rank theorem
gives exactly $d$ defining relations \cite{Sha66}; Koch and Venkov
show that, for odd $p$, the action of complex conjugation forces their
Zassenhaus depths to be odd and at least three \cite{KV74}.  Combined
with the Golod--Shafarevich inequality, these restrictions imply
that $G_{K,\varnothing}(p)$ is infinite whenever $d\geq3$.
Thus $d=2$ is the only generator rank for which these general
results do not decide whether the tower terminates.

In the two-generator case, the results of Golod--Shafarevich and
Koch--Venkov restrict finite tower groups to three Zassenhaus types:
$(3,3)$, $(3,5)$, and $(3,7)$; see \cite{KV74} and
\cite[Section~2]{McL08}.  Determining finiteness within these three
types requires further information.  In
\cite[Conjecture~2.9]{McL08}, McLeman proposed the following
characterization of finite towers in the two-generator setting.

\begin{conjecture}\label{conj:33}
Let $p$ be an odd prime and let $K/\Q$ be imaginary quadratic with
$d_p\Cl(K)=2$.  Then $K_{\emptyset}(p)/K$ is finite if and only if
$G_{K,\varnothing}(p)$ has Zassenhaus type $(3,3)$.
\end{conjecture}

Ahlqvist and Carlson disproved the necessity direction of
Conjecture~\ref{conj:33} in \cite[Theorem~5.7]{AC25}.  Their examples
include the pairs
\[
 (p,\operatorname{disc}(K))=(5,-90868),\qquad(7,-159592),
\]
for which the $p$-class field towers are finite although the
Zassenhaus type differs from $(3,3)$. In this paper, we study the sufficiency implication of Conjecture~\ref{conj:33}.  

\begin{theorem}\label{thm:mcleman-pgt3}
 Let $p>3$ and let $K/\Q$ be imaginary quadratic with
$d_p\Cl(K)=2$.  If $G_{K,\varnothing}(p)$ has Zassenhaus type $(3,3)$, then
$K_{\emptyset}(p)/K$ is finite.
\end{theorem}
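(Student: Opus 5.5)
Let $G=G_{K,\varnothing}(p)$. The plan is to show that $G$ is $p$-adic analytic, i.e.\ that it embeds into $\GL_2(\Z_p)$, or at least that a suitable quotient $G/H$ with $H$ finite does. Corollary~\ref{cor:rank2-padic} then forces the image to be finite, and so $G$ itself is finite. The input is the structure theory of two-generator Zassenhaus type $(3,3)$ pro-$p$ groups for $p>3$: such a group has a presentation $\langle x,y\mid r_1,r_2\rangle$ with both relations in the third Zassenhaus filtration step $D_3\setminus D_4$. By Koch--Venkov, complex conjugation acts as inversion on $G^{\ab}$, and the relations can be chosen so that their degree-$3$ initial forms are $[[x,y],x]$ and $[[x,y],y]$ (up to units). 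Since the graded Lie algebra $\mathfrak g=\mathrm{gr}(G)\otimes\F_p$ is then the free Lie algebra on two generators modulo both degree-$3$ brackets, the quotient $G/D_4(G)$ is the Heisenberg-type group of order $p^3$, and I expect $G$ to be finite of small size directly.

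\textbf{Step 1 (finite or analytic).} First I will try to prove directly that $\mathfrak g$ is finite-dimensional, concentrated in degrees $\le 2$, which would give $|G|=p^3$ by the Zassenhaus/Lazard correspondence. This is McLeman's and Koch--Venkov's computation for $p>3$. If restricted Lie algebra issues arise from $p$-th powers in degree $p\ge5$, I will use $p>3$ to ensure that the $p$-power map lands in degree $\ge 5$, where everything already vanishes.

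\textbf{Step 2 (fallback via Corollary~\ref{cor:rank2-padic}).} If the graded computation only shows that $G$ has finite rank, i.e.\ that $\dim_{\F_p}H^1(U,\F_p)$ is bounded over open $U$, I will argue as follows. Lazard's theorem makes $G$ $p$-adic analytic. Since $G$ is generated by two elements with $G^{\ab}$ finite, its Lie algebra $L$ is a $\Q_p$-Lie algebra with $L^{\ab}=0$ after passing to open subgroups. Such an $L$ is either $0$ or semisimple of bounded dimension. Complex conjugation acts on $L$, and the Koch--Venkov symmetry (the $c$-action inverts $G^{\ab}$ and the relation module is twisted dual) makes the adjoint-type representation conjugate self-dual. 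A nonzero semisimple $L$ arising from a two-generator group with two relations would have $\dim L\le 3$ (by the Euler characteristic $1-d+r=1$ and Lazard's Poincaré duality, $\chi=0$ forces…). So $L\cong\mathfrak{sl}_2$, giving $\rho:G\to\GL_2(\overline\Q_p)$ with infinite image, which contradicts Corollary~\ref{cor:rank2-padic}. Hence $L=0$ and $G$ is finite.

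\textbf{Main obstacle.} I expect the hardest step to be Step 1/2's passage from Zassenhaus type $(3,3)$ to finite rank (analyticity). Golod--Shafarevich only gives inequalities, and the vanishing of the graded algebra in high degree needs the precise initial forms of the relations. I would first try McLeman's explicit computation, which shows the graded algebra is nilpotent of class $\le 2$ when $p>3$. I would treat the case $p=3$, where $[[x,y],x]$ interacts with cubes, as excluded, consistent with the hypothesis.
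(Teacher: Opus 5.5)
There is a genuine gap: nothing in your proposal actually produces finiteness or $p$-adic analyticity, and Step~1, which is meant to, is false. Consider the pro-$p$ group $\langle x,y\mid[[x,y],x],[[x,y],y]\rangle$, the free nilpotent class-two pro-$p$ group on two generators (the $\Z_p$-Heisenberg group). It is infinite, and for $p>3$ it has Zassenhaus type $(3,3)$ in the sense of Definition~\ref{def:33}. So no computation that uses only the initial forms of the relations can give $|G|=p^3$.

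The error is that $\mathrm{gr}(G)=\bigoplus_n D_n(G)/D_{n+1}(G)$ is a \emph{restricted} Lie algebra. The cubic initial forms kill all brackets of degree $\geq3$. They do not kill the $p$-power elements $x^{[p^k]},y^{[p^k]}$ in degree $p^k$ or $[x,y]^{[p^k]}$ in degree $2p^k$, and these survive in the Heisenberg example. Your remark that everything ``already vanishes'' in degree $\geq5$ is exactly where this is overlooked. Independently, $|G|=p^3$ cannot hold for $G=G_{K,\varnothing}(p)$: the Heisenberg group of order $p^3$ has nontrivial Schur multiplier, hence $\dim_{\F_p}H^2>\dim_{\F_p}H^1$, so it is not a Schur $\sigma$-group. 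The computation you attribute to McLeman does not exist. The sufficiency direction was open precisely because the presentation alone does not decide finiteness; the FAb property (which excludes the Heisenberg example) and arithmetic must enter.

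Your overall shape is the paper's: realize $G$ inside a two-dimensional $p$-adic group and invoke Corollary~\ref{cor:rank2-padic}. But Step~2 presupposes the analyticity that Step~1 was supposed to supply, and it has its own holes. FAb only makes $L$ perfect, not semisimple; for example, $\mathfrak{sl}_2\ltimes\Q_p^2$ is perfect. Also, $\dim H^1=\dim H^2=2$ does not by itself bound $\dim L$, since $d(G)$ need not equal $\dim G$ when $G$ is not uniform, so your dimension count is unfinished.

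The missing ingredient is Pink's structure theorem (Theorem~\ref{thm:pink}). For $p>3$, an infinite strong Schur $\sigma$-group of type $(3,3)$ is isomorphic to an open subgroup of $\GG(\Q_p)$ for a $\Q_p$-form $\GG$ of $\PGL_2$, and $G_{K,\varnothing}(p)$ with the involution induced by $c$ is such a group. The paper argues by contradiction from this:
\begin{enumerate}
\item Splitting $\GG$ over a finite $E/\Q_p$ gives a faithful $G\hookrightarrow\PGL_2(E)$.
\item Lemma~\ref{lem:lift} lifts the compact pro-$p$ image to $\SL_2(E)$. The square-class determinant is trivial on a pro-$p$ group, and the extension by $\{\pm I\}$ splits by Schur--Zassenhaus since $p$ is odd.
\item Corollary~\ref{cor:rank2-padic} then forces this faithful representation to have finite image, contradicting $G$ infinite.
\end{enumerate}
Even your endgame ``$L\cong\mathfrak{sl}_2$ gives $\rho:G\to\GL_2(\overline\Q_p)$'' needs these same two steps: first the adjoint map to a form of $\PGL_2$, then the lift. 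This is because Corollary~\ref{cor:rank2-padic} applies to $G$ itself and not to its open subgroups, which are tower groups of fields that are no longer imaginary quadratic.
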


This theorem establishes the sufficiency direction of McLeman's $(3,3)$-conjecture for $p>3$. For $p=3$, we also have the following theorem.

\begin{theorem}\label{thm:mcleman-p3}
Let $K/\Q$ be imaginary quadratic with $d_3\Cl(K)=2$. Suppose that $G_{K,\emptyset}(3)$ has Zassenhaus type $(3,3)$ and
\[
G_{K,\emptyset}(3)/D_4(G_{K,\emptyset}(3))\simeq\operatorname{SmallGroup}(243,i),
\]
for some $i\in\{2,4,5,6,7,8,14,15,17,18\}$, where $D_4(G)$ denotes the fourth term of the Zassenhaus filtration of $G$, and $\operatorname{SmallGroup}(243,i)$ denotes the group with that identifier in the GAP Small Groups library. Then the extension $K_{\emptyset}(3)/K$ is finite.
\end{theorem}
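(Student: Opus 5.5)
The plan is to reduce the statement to Corollary~\ref{cor:rank2-padic} via the results of Ahlqvist and Pink. Put $G=G_{K,\emptyset}(3)$. Since $d_3\Cl(K)=2$, Corollary~\ref{cor:rank2-padic} (with $p=3$) says that every continuous representation $G\to\GL_2(\overline\Q_3)$ has finite image. The argument is then by contradiction: assume $G$ is infinite, and use the group-theoretic classification of Ahlqvist and Pink to produce a representation with infinite image.

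The key input is a structural theorem for infinite pro-$3$ groups with two generators, Zassenhaus type $(3,3)$, and a prescribed fourth Zassenhaus quotient. For the ten identifiers $i\in\{2,4,5,6,7,8,14,15,17,18\}$, I expect Ahlqvist--Pink to show that if such a group is infinite, then it is $3$-adic analytic of small dimension. They should in fact exhibit a continuous homomorphism to $\GL_2(\Z_3)$, or to $\GL_2$ of the ring of integers of a finite extension of $\Q_3$, with open image in an $\SL_2$-type or $\PGL_2$-type group. Such a homomorphism has infinite image.

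The steps, in order, are as follows. First, verify that the Ahlqvist--Pink hypotheses match ours: $G$ is a finitely presented pro-$3$ group with two generators and two relations of Zassenhaus depth $3$ (by Shafarevich and Koch--Venkov), and $G/D_4(G)\simeq\operatorname{SmallGroup}(243,i)$. Second, from infiniteness of $G$, extract the infinite-image representation $G\to\GL_2(\overline\Q_3)$, passing through a finite extension of coefficients if the Ahlqvist--Pink target is a form of $\PGL_2$ or an inner form. Third, contradict Corollary~\ref{cor:rank2-padic}.

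The main obstacle is the second step when the Ahlqvist--Pink target is $\PGL_2$ rather than $\GL_2$: a projective representation must be lifted to a genuine $\GL_2$-valued one. For this I would use that $G$ is pro-$3$ and that $H^2$-obstructions with coefficients $\mu_{3^\infty}$-type are killed after extending scalars. Concretely, I would lift through $\SL_2\to\PGL_2$: the kernel $\{\pm1\}$ has order prime to $3$, so the lift exists and is still infinite. If an intermediate step of the Ahlqvist--Pink argument instead gives only an infinite quotient that is a quaternion-algebra unit group, I would tensor with $\overline\Q_3$, which splits the algebra, to obtain the representation. The remaining three identifiers are excluded precisely because there the classification leaves a possible infinite image that is not of this $\GL_2$ shape.
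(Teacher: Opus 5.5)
Your proposal is essentially the paper's proof. Assume $G=G_{K,\emptyset}(3)$ is infinite; Ahlqvist--Pink's Proposition~9.2(a) embeds it as an open subgroup of a $\Q_3$-form of $\PGL_2$, which becomes $\PGL_2(E)$ after a finite splitting extension $E$. Lemma~\ref{lem:lift} then lifts this faithfully to $\SL_2(E)$ just as you suggest: the image lies in the image of $\SL_2(E)$ because $E^\times/(E^\times)^2$ is a finite $2$-group, and the extension by $\{\pm I\}$ splits since $2$ is prime to $3$. This contradicts Corollary~\ref{cor:rank2-padic}. The one correction concerns your first step: the hypothesis to verify for Ahlqvist--Pink is that $G$ is a strong Schur $\sigma$-group (complex conjugation acting by $-1$ on $H^1$ and $H^2$, and the FAb property from class field theory), not merely that it is a two-generator, two-relator pro-$3$ group of type $(3,3)$.
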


The group $G_{K,\varnothing}(p)$ is a strong
Schur $\sigma$-group in the sense of \cite[Section 4]{Pink25}, permitting the structural
results of Pink and Ahlqvist--Pink used below. The connection with Boston's Conjecture~\ref{conj:boston} is through
linear representations of the tower group.  The group $G_{K,\varnothing}(p)$ is a
pro-$p$ quotient of $G_{K,\varnothing}$, and
Corollary~\ref{cor:rank2-padic} gives finite image for all of its
two-dimensional $p$-adic representations when $d_p\Cl(K)=2$. Note that finite image of representations alone does not force $G_{K,\varnothing}(p)$ to be finite, and one needs a faithful representation.
For $p>3$, Pink's structural theorem
\cite[Proposition~8.1]{Pink25} supplies a faithful projective
two-dimensional $p$-adic representation if a strong Schur
$\sigma$-group of type $(3,3)$ is infinite. Moreover, one can show that a compact pro-$p$ projective image lifts, after extending scalars, to $\SL_2$ because $p$ is odd. Corollary~\ref{cor:rank2-padic} then rules out the
infinite group.  Ahlqvist--Pink's projective embeddings in \cite[Proposition~9.2(a)]{AP26} give the analogous argument for
the ten listed fourth-quotient types at $p=3$. The cases with fourth-quotient identifiers $3,9,13$ are still open.

\subsection{Organization and notation}
Section~\ref{sec:inputs} collects the arithmetic inputs. We prove
Theorem~\ref{thm:combined-intro} in Section~\ref{sec:odd-completion},
establish the imaginary quadratic results in Section~\ref{sec:quadratic},
and apply them to McLeman's conjecture in Section~\ref{sec:mcleman}.

Throughout this paper, we write $\Q$ for the field of rational numbers, $\F_p$ for the field with $p$ elements, and $\Q_p$ for the field of $p$-adic numbers.
We fix an algebraic closure $\overline{\Q}_p$ of $\Q_p$, endowed with
its usual $p$-adic topology. For each rational prime $\ell$, we identify $G_{\Q_\ell}$ with a chosen decomposition subgroup of $G_\Q$. We write $I_\ell$ for its inertia subgroup. These subgroups are determined up
to conjugacy in $G_\Q$.

\section{Arithmetic inputs}
\label{sec:inputs}
Throughout this section and Section~\ref{sec:odd-completion}, let
$p>2$ be an odd prime. Let $k$ be a finite
field of characteristic $p$ and let $\mathcal O=W(k)$ be the ring
of infinite Witt vectors over $k$. 

\begin{lemma}
\label{lem:arithmetic-finiteness}
Let $K$ be a number field and $S$ a finite set of primes of $K$. Let $A$ be a complete Noetherian
commutative local ring with finite residue field of characteristic
$p$, and let
$\rho:G_{K,S}\to\GL_n(A)$ be a continuous homomorphism.
Then the image of $\rho$ is topologically finitely generated. Furthermore, if $S$ contains no prime above $p$ and the image $\text{Im}(\rho)$ of $\rho$ contains a closed solvable subgroup of finite index, then $\text{Im}(\rho)$ is finite.
\end{lemma}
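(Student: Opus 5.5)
The plan is to reduce both assertions to class field theory for finite extensions of $K$, using an open pro-$p$ subgroup of $\GL_n(A)$. Let $\mathfrak m$ be the maximal ideal of $A$ and $k_A=A/\mathfrak m$ its residue field. Put
\[
U:=\ker\bigl(\GL_n(A)\to\GL_n(k_A)\bigr).
\]
Then $U=\varprojlim_j \ker\bigl(\GL_n(A/\mathfrak m^j)\to\GL_n(k_A)\bigr)$. Each term of this system is a finite $p$-group, since $A/\mathfrak m^j$ is finite with a filtration by $\mathfrak m^i/\mathfrak m^{i+1}$. So $U$ is an open normal pro-$p$ subgroup of $\GL_n(A)$.

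Let $\Gamma=\operatorname{Im}(\rho)$, which is closed because $G_{K,S}$ is compact. Then $\Gamma_1:=\Gamma\cap U$ is an open normal pro-$p$ subgroup of $\Gamma$. Its preimage in $G_{K,S}$ is open, so it equals $G_{L,S_L}$ for a finite extension $L/K$ inside $K_S$, where $S_L$ is the set of primes of $L$ above $S$. Since $\Gamma_1$ is pro-$p$, the surjection $G_{L,S_L}\twoheadrightarrow\Gamma_1$ factors through $G_{L,S_L}(p)$.

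\textbf{Finite generation.} By Burnside's basis theorem for pro-$p$ groups, $d(G_{L,S_L}(p))=\dim_{\F_p}\operatorname{Hom}(G_{L,S_L},\F_p)$. By class field theory this equals $\dim_{\F_p}$ of the quotient by $p$-th powers of a ray class group of $L$ whose modulus is supported on $S_L$ (with $p$-power exponents at primes of $S_L$ above $p$). That quotient is finite; equivalently, a number field has only finitely many $\Z/p$-extensions unramified outside a finite set. Hence $\Gamma_1$ is topologically finitely generated. Adding finitely many coset representatives for $\Gamma/\Gamma_1$ gives a finite topological generating set for $\Gamma$.

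\textbf{Finiteness in the solvable case.} Now assume $S$ contains no prime above $p$ and let $\Delta\le\Gamma$ be closed, solvable and of finite index, hence open. Then $P:=\Delta\cap\Gamma_1$ is an open, closed, solvable pro-$p$ subgroup of $\Gamma$. Write $P^{(0)}=P$ and let $P^{(i+1)}$ be the closure of $[P^{(i)},P^{(i)}]$. Solvability of $P$ gives $P^{(r)}=1$ for some $r$. (The closures of the abstract derived subgroups are contained in the closures of the derived subgroups of the abstract solvable group, so they also terminate.)

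We show by induction on $i$ that $P^{(i)}$ is open in $\Gamma$. The case $i=0$ holds. If $P^{(i)}$ is open, its preimage is $G_{L_i,S_{L_i}}$ for a finite extension $L_i/K$. Then $P^{(i)}/P^{(i+1)}$ is an abelian pro-$p$ quotient of $G_{L_i,S_{L_i}}$, hence a quotient of $G_{L_i,S_{L_i}}(p)^{\ab}$.

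\emph{Main step.} We need $G_{L_i,S_{L_i}}(p)^{\ab}$ to be finite. By class field theory it is the $p$-part of the inverse limit of the ray class groups of $L_i$ with modulus supported on the finite primes of $S_{L_i}$, together with the real places. This group is an extension of the $p$-part of a finite ray class group by a quotient of $\prod_{v\in S_{L_i},\,v\nmid\infty}\mathcal O_v^\times$. Because no $v$ lies above $p$, the pro-$p$ completion of $\mathcal O_v^\times$ is the finite cyclic group $\mu_{p^\infty}(L_{i,v})$. So the limit is finite.

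It follows that $P^{(i+1)}$ has finite index in $P^{(i)}$, and being closed it is open. This completes the induction. Taking $i=r$ shows that $1=P^{(r)}$ is open in $\Gamma$, so $\Gamma$ is finite.

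The only genuinely arithmetic input is the finiteness statement in the main step: tame ramification at primes not above $p$ contributes only finitely much abelian pro-$p$ ramification. Everything else is formal profinite group theory.
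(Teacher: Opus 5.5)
Your argument is correct. The paper gives no proof of its own here; it simply cites \cite[Theorems~2.3 and~2.4]{Luo26}. Your proposal is therefore a self-contained substitute, presumably along the lines of the reasoning behind those results.

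You pass to the open normal pro-$p$ subgroup $\Gamma\cap U$ and identify the preimages of open subgroups with $G_{L,S_L}$ for finite $L\subset K_S$. This reduces finite generation to the finiteness of $\dim_{\F_p}\operatorname{Hom}(G_{L,S_L},\F_p)$, which follows from class field theory or Hermite--Minkowski. It reduces the solvable case to the finiteness of $G_{L,S_L}(p)^{\ab}$ when $S$ contains no prime above $p$. You then apply that finiteness successively down the closed derived series of the open solvable pro-$p$ subgroup $P$. Your remark that $P^{(i)}$ is contained in the closure of the $i$-th abstract derived subgroup correctly handles the difference between abstract and topological solvability.

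What your version buys is transparency. It isolates the only two arithmetic inputs: finite $p$-rank of $G_{L,S_L}$, and the tame finiteness of $G_{L,S_L}(p)^{\ab}$, i.e.\ the FAb-type property of $G_{K,S}(p)$. It also shows that nothing about $\GL_n(A)$ is used beyond the existence of an open normal pro-$p$ subgroup, which is where completeness and the Noetherian hypothesis enter. The citation, by contrast, keeps the paper short.
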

\begin{proof}
It follows from {\cite[Theorems~2.3 and~2.4]{Luo26}}.
\end{proof}

\begin{lemma}\label{lem:finite-traces}
Let $K$ be a number field, let $S$ be a finite set of primes of
$K$ containing no prime above $p$, and let $F$ be a
non-Archimedean local field of residue characteristic $p$.
Let $n\geq1$, and assume that either $\operatorname{char}F=0$
or $p>n$. If $\rho:G_{K,S}\to\GL_n(F)$ is a continuous
representation for which the set
\[
 \mathcal T=\{\tr\rho(g):g\in G_{K,S}\}
\]
is finite, then $\rho$ has finite image.
\end{lemma}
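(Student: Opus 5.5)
Since $\mathcal T$ is finite, the strategy is to force the image to be virtually solvable, in fact finite, through a Burnside/Schur-type argument, and then appeal to Lemma~\ref{lem:arithmetic-finiteness}. Set $G=\rho(G_{K,S})\subset\GL_n(F)$. This is a compact group, and it is topologically finitely generated by Lemma~\ref{lem:arithmetic-finiteness}. Every $g\in G$ has trace in the finite set $\mathcal T$, and so does every power $g^m$. Let $E$ be the finite extension of the prime field generated by $\mathcal T$, and let $R\subset M_n(F)$ be the $E$-span of $G$ (equivalently, of $\rho(G_{K,S})$).

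First I would replace $\rho$ by its semisimplification $\rho^{\ssem}$. The kernel of $G\to G^{\ssem}$ is unipotent, so it is a closed subgroup that is nilpotent, hence solvable. Therefore, if $G^{\ssem}$ is finite, $G$ has a closed solvable subgroup of finite index. Since $S$ contains no prime above $p$, Lemma~\ref{lem:arithmetic-finiteness} then makes $G$ finite. The semisimplification has the same traces, so I may assume $\rho$ is semisimple. Passing to a finite extension of $F$, which is harmless, I may also assume it is a direct sum of absolutely irreducible pieces $\rho_i$.

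Next I would show that each absolutely irreducible $\rho_i$ has finite image.

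\emph{Case $\operatorname{char}F=0$.} By Burnside's theorem, $\rho_i(G)$ spans $M_{n_i}(F)$, so we can choose $g_1,\dots,g_{n_i^2}$ whose images form a basis. The trace form $(x,y)\mapsto\tr(xy)$ is nondegenerate on $M_{n_i}(F)$. For each $g$, the vector $(\tr\rho_i(gg_j))_j$ determines $\rho_i(g)$. These traces lie in the finite set of traces of $\rho_i$. The traces of $\rho_i$ need not be among the values of $\tr\rho$, however. To handle this, I would use the central idempotents of the semisimple algebra $F[G]$: realize the traces of each $\rho_i$ by projecting, i.e.\ use that $\tr\rho(gz)$ for suitable $z$ in the span of $G$. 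A cleaner route is to apply the Burnside argument directly to the semisimple algebra $A=F[\rho(G)]$. The trace form of $\rho$ restricted to $A$ is nondegenerate: in characteristic $0$ the trace form of a faithful semisimple representation is nondegenerate, since $\tr$ of a nilpotent is $0$ and the radical is zero. So pick a basis $\rho(g_1),\dots,\rho(g_N)$ of $A$ consisting of group elements. Then each $\rho(g)$ is determined by $(\tr\rho(gg_j))_j\in\mathcal T^N$, and $G$ is finite.

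\emph{Case $\operatorname{char}F=p>n$.} The same argument works, but nondegeneracy of the trace form on $A$ needs care. For absolutely irreducible pieces, $A\cong\prod_i M_{n_i}(F)$ when the $\rho_i$ are pairwise non-isomorphic. In general $\rho=\bigoplus\rho_i^{m_i}$, and on the factor $M_{n_i}$ the trace form is $m_i$ times the matrix trace form. This is nondegenerate exactly when $p\nmid m_i$, which holds since $m_i\le n<p$. So again $\rho(g)$ is determined by finitely many traces drawn from $\mathcal T$, and $G$ is finite.

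The main obstacle is this nondegeneracy of the trace pairing on the span of the semisimplified image, which is precisely where the hypothesis $p>n$ (or characteristic $0$) enters. The other step needing care is transporting finiteness back from $\rho^{\ssem}$ to $\rho$ via the solvable-kernel criterion of Lemma~\ref{lem:arithmetic-finiteness}. That criterion requires $\operatorname{Im}\rho$ to sit in $\GL_n(A)$ for a complete local ring $A$. Here it does, because a compact subgroup of $\GL_n(F)$ is conjugate into $\GL_n(\mathcal O_F)$.
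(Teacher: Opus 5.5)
Your proof is correct, but it follows a different route from the paper's.

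\textbf{The paper's route.} The paper argues element by element. Since $1,\dots,n$ are invertible in $F$, Newton's identities show that the characteristic polynomial of $\rho(g)$ is determined by $\tr\rho(g),\dots,\tr\rho(g^n)\in\mathcal T$. So only finitely many characteristic polynomials, and hence finitely many eigenvalues, occur. Every power of an eigenvalue of $\rho(g)$ is an eigenvalue of a power of $\rho(g)$, so all eigenvalues are roots of unity. Finiteness of the image is then quoted from \cite[Proposition~3.1.2]{Luo23Thesis}.

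\textbf{Your route.} You split the problem into an arithmetic part and a linear-algebraic part. Lemma~\ref{lem:arithmetic-finiteness} is used only to dispose of the unipotent kernel of $\rho(G_{K,S})\to\rho^{\ssem}(G_{K,S})$. The semisimple image is bounded directly through the trace pairing on $A=F[\rho^{\ssem}(G_{K,S})]\cong\prod_i M_{n_i}(F)$, which gives $|\rho^{\ssem}(G_{K,S})|\le|\mathcal T|^{\dim_F A}$.

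\textbf{Comparison.} Your version is self-contained modulo Lemma~\ref{lem:arithmetic-finiteness} and yields an explicit bound. It also shows exactly where each hypothesis enters: $p>n$ only through $p\nmid m_i$, and the arithmetic of $G_{K,S}$ only through the unipotent part. The paper's version is shorter on the page, but it delegates the passage from ``all eigenvalues are roots of unity'' to ``finite image'' to an external reference.

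\textbf{Two minor points.}
\begin{enumerate}
\item Your justification of nondegeneracy in characteristic $0$ cites the wrong implication; ``the trace of a nilpotent is $0$'' is not what is needed. The relevant fact is that the radical of the trace form is a two-sided ideal whose elements $x$ satisfy $\tr(x^k)=0$ for all $k\geq1$. In characteristic $0$ this ideal is therefore nil, so it lies in the Jacobson radical of $A$, which is zero. In any case, your explicit formula $\tr(xy)=\sum_i m_i\tr(x_iy_i)$ handles both characteristics at once.
\item In characteristic $p$ a finite extension $F'/F$ may be inseparable, and then $\rho^{\ssem}\otimes F'$ need not remain semisimple. Simply enlarge $F$ first, so that a composition series with absolutely irreducible factors is defined, and semisimplify afterwards. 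This costs nothing, since your reduction step works over any local field.
\end{enumerate}
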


\begin{proof}
	Since $1,\cdots,n$ are invertible in $F$ under our assumption, the characteristic polynomial of
$\rho(g)$ is determined by
$\tr\rho(g),\tr\rho(g^2),\ldots,\tr\rho(g^n)$. It follows that only finitely many characteristic polynomials occur, and their roots form a finite
set. If $\lambda$ is an eigenvalue of $\rho(g)$, every positive
power of $\lambda$ is an eigenvalue of a power of $\rho(g)$. Thus, $\lambda$ is a root of unity. Then our claim follows
from \cite[Proposition~3.1.2]{Luo23Thesis}.
\end{proof}

\begin{lemma}\label{lem:cyclotomic-image}
If $\bar\rho:G_{\Q}\to\GL_2(k)$ is a continuous representation over $k$ which is unramified at $p$, then we have
\[
 \bar\rho(G_{\Q(\zeta_p)})=\bar\rho(G_\Q),
\]
where $\zeta_p $ denotes a primitive $p$-th root of unity.
\end{lemma}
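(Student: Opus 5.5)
Let $L$ be the fixed field of $\ker\bar\rho$, a finite Galois extension of $\Q$ with group $G=\bar\rho(G_\Q)\subset\GL_2(k)$. The claim $\bar\rho(G_{\Q(\zeta_p)})=\bar\rho(G_\Q)$ says that $\Gal(L/L\cap\Q(\zeta_p))\to\Gal(L/\Q)$ is onto. Equivalently, $L\cap\Q(\zeta_p)=\Q$. So the plan is to show that $L$ and $\Q(\zeta_p)$ are linearly disjoint over $\Q$.

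Set $M=L\cap\Q(\zeta_p)$. It is a subfield of $\Q(\zeta_p)$, and $\Q(\zeta_p)/\Q$ is totally ramified at $p$. Hence $M/\Q$ is totally ramified at $p$, and it is nontrivially ramified at $p$ unless $M=\Q$. On the other hand, $M\subset L$, and $L/\Q$ is unramified at $p$ because $\bar\rho$ is unramified at $p$: the inertia subgroup $I_p$ maps trivially under $\bar\rho$. So every subextension of $L$, in particular $M$, is unramified at $p$. Being both totally ramified and unramified at $p$ forces $[M:\Q]=1$.

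Then Galois theory gives the result. The restriction map $\Gal(L\Q(\zeta_p)/\Q(\zeta_p))\to\Gal(L/M)=\Gal(L/\Q)$ is an isomorphism, and the image of $G_{\Q(\zeta_p)}$ in $\Gal(L/\Q)\cong\bar\rho(G_\Q)$ is exactly this restriction image. Therefore $\bar\rho(G_{\Q(\zeta_p)})=\bar\rho(G_\Q)$.

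There is no real obstacle here; the only point needing care is the justification that subextensions of $\Q(\zeta_p)$ stay totally ramified at $p$. This follows since the unique prime above $p$ in $\Q(\zeta_p)$ has ramification index $p-1=[\Q(\zeta_p):\Q]$, and ramification indices are multiplicative in towers. Note also that the argument does not use that the coefficients lie in $\GL_2(k)$, nor that $p$ is odd; only finiteness of the image (automatic for $k$ finite and $\bar\rho$ continuous) and unramifiedness at $p$ are needed.
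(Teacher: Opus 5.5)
Your proposal is correct and takes the same approach as the paper. The paper's one-line proof cites the total ramification of $\Q(\zeta_p)/\Q$ at $p$, and you fill in the details by showing that the fixed field of $\ker\bar\rho$ is unramified at $p$ and therefore meets $\Q(\zeta_p)$ only in $\Q$.
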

\begin{proof}
It follows from the fact that $\Q(\zeta_p)/\Q$ is totally ramified at $p$.
\end{proof}

We need the following characteristic-zero input of Zhang.

\begin{theorem} \label{thm:zhang-full}
	Let $p>2$, and let $\rho:G_{\Q}\to\GL_2(\overline{\Q}_p)$ be a  continuous irreducible representation. Suppose that
	\begin{enumerate}
		\renewcommand{\labelenumi}{\textup{(\roman{enumi})}}
		\item $\rho$ is unramified outside finitely many primes;
		\item $\rho$ is odd;
		\item $\rho|_{G_{\Q_p}}$ is Hodge--Tate with weights $\{0,0\}$;
		\item $\bar\rho^{\ssem}=\bar\chi_1\oplus\bar\chi_2$, with $\bar\chi_1\ne\bar\chi_2$ where $\bar\rho^{\ssem}$ denotes the semisimplification of the residual representation of $\rho$; 
		\item $\rho|_{G_{\Q_p}}$ is reducible.
	\end{enumerate}
	Then $\rho$ has finite image. 
\end{theorem}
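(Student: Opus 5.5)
This theorem is Zhang's weight-one theorem, so the plan is to reduce it to the precise form of \cite[Theorem~5.4.1]{Zhang26} rather than to reprove anything. First I descend the coefficients. By compactness of $G_\Q$, $\rho$ is defined over a finite extension $E/\Q_p$ with ring of integers $\mathcal O_E$ and residue field $k_E$, and after conjugation we may assume $\rho$ takes values in $\GL_2(\mathcal O_E)$. Enlarging $E$ if necessary, I assume that $\bar\chi_1,\bar\chi_2$ are defined over $k_E$. Irreducibility of $\rho$ together with $\bar\rho^{\ssem}=\bar\chi_1\oplus\bar\chi_2$ with $\bar\chi_1\ne\bar\chi_2$ lets me choose, by Ribet's lattice lemma, a stable lattice for which $\bar\rho$ is a nonsplit extension of $\bar\chi_2$ by $\bar\chi_1$ (or the reverse). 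This is the kind of residual normalization Zhang's setup uses.

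Next I translate hypotheses (iii) and (v) into Zhang's local condition at $p$. Since $\rho|_{G_{\Q_p}}$ is reducible, it is an extension of a character $\psi_2$ by a character $\psi_1$. Both $\psi_i$ are Hodge--Tate, being subquotients of a Hodge--Tate representation, and their weights are $0$ by (iii). By Sen's theory, a Hodge--Tate character of weight $0$ of $G_{\Q_p}$ has finite image on inertia. Thus $\rho|_{G_{\Q_p}}$ is an extension of potentially unramified characters, i.e.\ it is ordinary of weight $0$ on both graded pieces. This is exactly the ``weight one'' $p$-distinguished or ordinary local hypothesis. If Zhang instead requires a split or $\bar\chi_1\ne\bar\chi_2$ on $G_{\Q_p}$, the remaining extension class in $H^1(G_{\Q_p},\psi_1\psi_2^{-1})$ must be handled. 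Since $\psi_1\psi_2^{-1}$ has finite order on inertia, this class is de Rham, and I would check that Hodge--Tate weight $\{0,0\}$ forces the extension to be the one in Zhang's hypothesis.

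With these normalizations, oddness (ii) and finite ramification (i) are exactly the global hypotheses. Invoking \cite[Theorem~5.4.1]{Zhang26} gives that $\rho$ arises from a weight-one modular form. Hence, by Deligne--Serre, $\rho$ has finite image. If Zhang's theorem is stated directly with the finiteness conclusion, this last step is immediate.

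The main obstacle is matching the local hypothesis at $p$. Zhang's theorem may require $\rho|_{G_{\Q_p}}$ to be ordinary with a specified unramified quotient, or $p$-distinguishedness of $\bar\chi_1|_{G_{\Q_p}}\ne\bar\chi_2|_{G_{\Q_p}}$. The first thing I would try is to show that any reducible weight-$\{0,0\}$ Hodge--Tate restriction satisfies Zhang's condition after a finite-order twist. The twist is harmless, since finite image is invariant under it, and oddness is preserved by twisting by $\chi$ since $\det$ changes by $\chi^2(c)=1$.
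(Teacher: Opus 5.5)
Your proposal takes the same route as the paper: the paper's proof is just the citation of \cite[Theorem~5.4.1]{Zhang26}, and the Ribet-lattice and local-at-$p$ normalizations you sketch are extra preparation that the paper does not carry out. One correction to your hedge: finite order of $\psi_1\psi_2^{-1}$ on inertia does not by itself make the extension class de Rham (the extension of $\one$ by $\one$ given by $\log$ of the cyclotomic character is not even Hodge--Tate); the correct argument is that (iii) makes $\rho|_{G_{\Q_p}}$ $\mathbf C_p$-admissible, so Sen's theorem gives $\rho(I_p)$ finite and hence $\rho|_{G_{\Q_p}}$ potentially unramified.
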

\begin{proof}
It follows from \cite[Theorem~5.4.1]{Zhang26}.
\end{proof}

Since an unramified local representation is determined by one Frobenius matrix, $\rho|_{G_{\Q_p}}$ is reducible whenever $\rho$ is unramified at $p$. For simplicity, this fact will be used often without an explicit reference.

For the positive-characteristic argument, we need the following two finiteness results of Zhang.

\begin{theorem}\label{thm:zhang-unramified-pseudodeformations}
Let $S$ be a finite set of primes of $\Q$ containing
$\infty$ but not $p$. Let $\bar\chi:G_{\Q,S}\to k^\times$ be a continuous odd character,
and let $\widetilde\chi$ be its Teichm\"uller lift. Then the universal pseudodeformation ring of $1+\bar\chi$ for $G_{\Q,S}$ with
determinant $\widetilde\chi$ is finite over $\mathcal O$.
\end{theorem}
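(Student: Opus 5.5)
This statement is imported from Zhang's work rather than proved from scratch, and I would present it that way. The plan is to identify it with the unramified (at $p$) finiteness theorem for residually reducible pseudodeformation rings in \cite{Zhang26}, and to check that the hypotheses there match ours.

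\emph{Matching the pseudorepresentation.} The residual pseudorepresentation is $D=1+\bar\chi$ of $G_{\Q,S}$, which is two-dimensional. It is odd because $\bar\chi(c)=-1$. It is multiplicity free because $\bar\chi\neq1$, which holds since $\bar\chi(c)=-1\neq1$ for $p>2$. We fix the determinant $\widetilde\chi$, the Teichm\"uller lift of $\det D=\bar\chi$. Since $p\notin S$, every pseudodeformation is unramified at $p$. After any finite-order twist or restriction to $G_{\Q_p}$, it is therefore Hodge--Tate of weights $\{0,0\}$ at $p$, with reducible local restriction.

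\emph{Obtaining finiteness.} Zhang's theorem should say that the universal ring $R$ is finite over $\mathcal O$. The underlying mechanism is a comparison $R\simeq\mathbf T$ with a Hecke algebra acting on weight-one Katz modular forms, or on their mod-$p^m$ and torsion analogues, of level prime to $p$ determined by $S$ and nebentypus $\widetilde\chi$. The finiteness of $\mathbf T$ over $\mathcal O$ then comes from finite-dimensionality of spaces of weight-one forms of fixed level.

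\emph{Main obstacle.} The step needing care is ensuring that the statement in \cite{Zhang26} applies in exactly this generality. That means no condition on the conductor of $\bar\chi$ beyond $p\notin S$, and determinant exactly $\widetilde\chi$ rather than $\widetilde\chi$ times a power of the cyclotomic character. If Zhang states the result only after a finite twist or over an enlarged coefficient ring $W(k')$, I would reduce to that setting. Finiteness descends along finite flat base change $\mathcal O\to W(k')$. A twist by a finite-order character of $G_{\Q,S}$ gives an isomorphism of pseudodeformation rings, matching determinants $\widetilde\chi\psi^2$.
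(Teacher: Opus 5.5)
Your proposal is correct and takes the same approach as the paper, which also proves this statement only by citation, to \cite[Remark~6.1.2]{Zhang26}. Your hypothesis check is what the citation needs: the residual pseudorepresentation is odd, $\bar\chi\neq 1$ because $\bar\chi(c)=-1$ and $p>2$, and it is unramified at $p$ because $p\notin S$. The $R\simeq\mathbf T$ mechanism you sketch is speculation about Zhang's argument and is not needed here.
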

\begin{proof}
	It follows from \cite[Remark~6.1.2]{Zhang26}.
\end{proof}

\begin{theorem}
\label{thm:zhang-unramified-deformations}
Let $S$ be a finite set of primes of $\Q$ containing
$\infty$ but not $p$. Let $\bar\rho:G_{\Q,S}\to\GL_2(k)$ be a continuous odd absolutely irreducible representation. Then the universal deformation ring of $\bar{\rho}$ with determinant $\widetilde{\detm\bar\rho}$ is finite
over $\mathcal O$, where $\widetilde{\detm\bar\rho}$ denotes the
Teichm\"uller lift of $\detm\bar\rho$.
\end{theorem}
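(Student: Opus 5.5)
The plan is to reduce the statement to a finiteness statement for a pseudodeformation ring, and then control that ring by a dimension argument. The characteristic-zero points will be handled by the weight-one modularity of Pilloni--Stroh, and the characteristic-$p$ points by the unramified finiteness results of \cite{Zhang26}.

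\textbf{Reduction to pseudodeformations.} Let $R$ be the universal deformation ring of $\bar\rho$ with determinant $\widetilde{\detm\bar\rho}$. It is a complete Noetherian local $\mathcal O$-algebra, because $G_{\Q,S}$ satisfies Mazur's $p$-finiteness condition. Since $\bar\rho$ is absolutely irreducible, the results of Nyssen and Rouquier show that taking the trace identifies $R$ with the universal pseudodeformation ring of $\tr\bar\rho$ with the same fixed determinant. By the complete version of Nakayama's lemma, $R$ is finite over $\mathcal O$ if and only if $R/\varpi R$ is finite over $k$, i.e.\ if and only if $\dim R/\varpi R=0$. I would establish this by controlling the points of $R$ in both characteristics.

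\textbf{Points in characteristic zero.} Let $x:R\to\overline\Q_p$ be a point, with associated representation $\rho_x$. Then $\rho_x$ is odd and unramified outside $S$, hence unramified at $p$, hence Hodge--Tate with weights $\{0,0\}$. Its residual representation is $\bar\rho$, which is absolutely irreducible. By Pilloni--Stroh \cite[Theorem~0.2 and Corollary~2.2.3]{PilloniStroh16}, $\rho_x$ is attached to a weight-one newform. Its level is bounded in terms of $S$ and $\bar\rho$: the level is supported on $S$, and the exponents are bounded by tame-ramification bounds, since the image of inertia at $\ell\neq p$ is controlled by a finite quotient. Therefore only finitely many such $\rho_x$ occur, and so $R[1/p]$ has only finitely many closed points.

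\textbf{Points in characteristic $p$ (the main obstacle).} We must exclude a positive-dimensional component of $R/\varpi R$. Knowing that $R[1/p]$ has finitely many points is not enough for this, since it only bounds $\dim R\le1$. I would first try to get $\dim R/\varpi R=0$ from the structure of the weight-one Hecke algebra. Concretely, I would show that $R\to\mathbf T$ is surjective onto a weight-one Hecke algebra $\mathbf T$ that is finite over $\mathcal O$, with nilpotent kernel; this is the $R=\mathbf T$ statement in Calegari--Geraghty's framework for the derived weight-one Hecke algebra. If that fails, the fallback is as follows. Suppose $\dim R/\varpi R\ge1$. Then there is a map $R\to k'[[T]]$ with infinite image, yielding an odd representation $G_{\Q,S}\to\GL_2(k'[[T]])$ whose trace is not constant, i.e.\ not of finite image. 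One would then have to show that such a family lifts to characteristic zero. This lifting step is exactly what the characteristic-$p$ results of \cite{Zhang26} provide, as in the residually reducible analogue Theorem~\ref{thm:zhang-unramified-pseudodeformations}. So in practice I expect the theorem to follow from the residually irreducible counterpart of \cite[Remark~6.1.2]{Zhang26}, and I would cite it in that form.
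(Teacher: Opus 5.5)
Your closing sentence is the paper's entire proof. The paper deduces the statement directly from \cite[Remark~A.3.5]{Zhang26}, which is the residually absolutely irreducible analogue of \cite[Remark~6.1.2]{Zhang26}; the acknowledgments say Zhang added that appendix for this application. The paper uses none of your preceding scaffolding. Taken as an independent argument, that scaffolding has a genuine gap at exactly the step that matters, so it should not be presented as part of the proof.

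The characteristic-zero count only shows that $R[1/p]$ has finitely many points. (It also needs the residual hypothesis of \cite[Theorem~0.2]{PilloniStroh16}, which the paper supplies via Lemma~\ref{lem:cyclotomic-image}.) Finitely many points means the torsion-free quotient $R/R[\varpi^\infty]$ is finite over $\mathcal O$. It does not give $\dim R\le 1$, or any bound on $\dim R$, because components lying entirely in characteristic $p$ are invisible in $R[1/p]$. So the characteristic-$p$ step is the whole theorem, and neither of your routes to it works as stated.

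\begin{itemize}
\item The weight-one $R=\mathbf T$ results of \cite{CalegariGeraghty18} cover minimal deformation problems. The non-minimal work \cite{Calegari18} needs local reducibility at every ramified prime. Here $S$ is arbitrary and no minimality is imposed, which is precisely the restriction this paper is removing.
\item Your fallback, lifting a $k'[[T]]$-valued family to characteristic zero, has no mechanism behind it. For odd $\bar\rho$ with the unramified condition at $p$, the Euler-characteristic lower bound is only the vacuous $\dim R\ge 0$ (the weight-one defect $\ell_0=1$ in Calegari--Geraghty's sense). So nothing forces a positive-dimensional characteristic-$p$ component to meet the generic fibre.
\end{itemize}

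Excluding such components is the new input in \cite{Zhang26}, which is why the correct proof here is simply the citation.
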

\begin{proof}
It follows from {\cite[Remark~A.3.5]{Zhang26}}.
\end{proof}

\section{Proof of Theorem \ref{thm:combined-intro}}
Throughout this section, assume that $p$ is an odd prime number.

\label{sec:odd-completion}
\begin{lemma}\label{lem:quadratic-solvable}
	Let $S$ be a finite set of primes of $K$ containing no prime above $p$, and let $F$ be a non-Archimedean local field of residue characteristic $p$. If a continuous two-dimensional representation $\rho:G_{K,S}\to \GL_{2}(F)$ of $G_{K,S}$ is not absolutely irreducible, then it has finite image.
\end{lemma}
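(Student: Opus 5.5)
The plan is to use the solvable-subgroup criterion of Lemma~\ref{lem:arithmetic-finiteness}. First, extend scalars to a finite extension $F'/F$ over which $\rho$ becomes reducible. Since $\rho$ is not absolutely irreducible and has dimension two, it has a one-dimensional invariant subspace over $\overline F$. Such a line, an eigenline common to all of $\rho(G_{K,S})$, is already defined over a finite extension: one can take the splitting field of the characteristic polynomial of a suitable element, or argue directly that the stable line is defined over a degree $\le 2$ extension. Replacing $F$ by $F'$, which is still a non-Archimedean local field of residue characteristic $p$, does not change finiteness of the image. So we may assume that, after conjugation in $\GL_2(F)$, $\rho$ lands in the Borel subgroup $B(F)$ of upper triangular matrices.

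Next, choose a compact model. The image of $\rho$ is compact, so it stabilizes a lattice $\mathcal O_F^2$. Intersecting with the stable line, we may choose a basis adapted to the flag such that $\rho$ takes values in $B(\mathcal O_F)\subset\GL_2(\mathcal O_F)$. Now $\mathcal O_F$ is a complete Noetherian local ring with finite residue field of characteristic $p$, so Lemma~\ref{lem:arithmetic-finiteness} applies to $\rho:G_{K,S}\to\GL_2(\mathcal O_F)$. The image is a closed subgroup of $B(\mathcal O_F)$. Since $B$ is solvable (its derived group is the unipotent radical, which is abelian), the image itself is a closed solvable subgroup, trivially of finite index in itself. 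Because $S$ contains no prime above $p$, Lemma~\ref{lem:arithmetic-finiteness} then gives that $\operatorname{Im}(\rho)$ is finite.

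The main point needing care is the first step: checking that absolute reducibility yields a stable line over a finite extension of $F$, and that the lattice can be chosen compatibly with the flag. For the first, I would take an $F$-rational invariant subspace of the $F[G]$-module if $\rho$ is reducible over $F$. Otherwise $\rho$ is irreducible but not absolutely irreducible, and then by Schur's lemma $\operatorname{End}_{F[G]}(F^2)$ is a quadratic field $E$ over which $\rho$ splits into two characters. The compatible lattice is routine: take any stable lattice $L$ and the saturated sublattice $L\cap(\text{line})$. The hypotheses that $\rho$ be unramified at $p$ and finitely generated are already built into Lemma~\ref{lem:arithmetic-finiteness}, so no further arithmetic input should be needed.
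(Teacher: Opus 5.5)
Your proposal is correct and follows the paper's argument: triangularize $\rho$ after a finite extension of $F$, observe that the image is then solvable, and conclude by Lemma~\ref{lem:arithmetic-finiteness}. Your extra steps, choosing a stable lattice adapted to the invariant line so that $\rho$ lands in $B(\mathcal O_F)$, just make explicit the passage to $\GL_2(\mathcal O_F)$ that the paper leaves implicit.
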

\begin{proof}
	If $\rho$ is not absolutely irreducible, then it becomes upper
	triangular after a finite extension of $F$. In particular, the image of $\rho$ is solvable. Then the claim follows from Lemma~\ref{lem:arithmetic-finiteness}.
\end{proof}

\begin{proposition}\label{prop:odd-local-fields}
Let $S$ be a finite set of primes of $\Q$ containing $\infty$ but not $p$, and let $F$ be a non-Archimedean local field of residue characteristic
$p$. Then every continuous odd representation
$\rho:G_{\Q,S}\to\GL_2(F)$ has finite image.
\end{proposition}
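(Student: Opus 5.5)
By Lemma~\ref{lem:quadratic-solvable} we may assume $\rho$ is absolutely irreducible. After replacing $F$ by a finite extension we may also assume $\rho$ is irreducible over $F$ and that $\rho$ takes values in $\GL_2(\mathcal O_F)$; this uses compactness of $G_{\Q,S}$ to obtain a stable lattice. We then split into two cases according to $\operatorname{char}F$.

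\emph{Characteristic zero.} We first reduce to $\overline\Q_p$-coefficients, so $\rho:G_\Q\to\GL_2(\overline\Q_p)$ is irreducible, odd, unramified outside $S$, and unramified at $p$.

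\begin{itemize}
\item Hodge--Tate weights: since $\rho$ is unramified at $p$, $\rho|_{G_{\Q_p}}$ is Hodge--Tate with weights $\{0,0\}$ and is reducible.
\item Residually reducible case: write $\bar\rho^{\ssem}=\bar\chi_1\oplus\bar\chi_2$. Oddness gives $\bar\chi_1(c)\bar\chi_2(c)=-1$, and $-1\neq 1$ in characteristic $p>2$. Hence $\bar\chi_1\ne\bar\chi_2$, and Theorem~\ref{thm:zhang-full} gives finite image.
\item Residually absolutely irreducible case: here $\rho$ is an odd, irreducible, unramified-at-$p$ lift. We invoke Pilloni--Stroh \cite{PilloniStroh16}, who prove modularity and hence finiteness for weight-one type representations of this kind. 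Lemma~\ref{lem:cyclotomic-image} shows that the image of $\bar\rho$ restricted to $G_{\Q(\zeta_p)}$ is still irreducible, which is the Taylor--Wiles type hypothesis they need.
\end{itemize}

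\emph{Characteristic $p$.} Here $F=k'((t))$ for a finite field $k'$. The plan is to show that $\rho$ factors through a specialization of a universal ring that is finite over $\mathcal O$. Such a specialization has finite image, since a map from a ring finite over $\mathcal O$ to a field of characteristic $p$ factors through the finite quotient $R/pR$. We reduce to this using Lemma~\ref{lem:finite-traces}; note that the hypothesis $p>n=2$ holds. It therefore suffices to show that $\tr\rho$ takes only finitely many values.

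\begin{itemize}
\item Twisting the determinant: $\detm\rho$ is a character of $G_{\Q,S}$ into $\mathcal O_F^\times$. Its pro-$p$ part has finite image, because the abelianized pro-$p$ quotient of $G_{\Q,S}$ is finite when $p\notin S$. After a finite twist, which exists by taking square roots since $p$ is odd, we may assume $\detm\rho$ is the Teichm\"uller lift of its reduction.
\item Residually absolutely irreducible case: $\tr\rho$ is obtained by specializing the universal deformation ring of Theorem~\ref{thm:zhang-unramified-deformations} along a map to $\mathcal O_F$. That ring is finite over $\mathcal O$, so its image in $\mathcal O_F$ is finite, and the traces lie in a finite set.
\item Residually reducible case: $\bar\rho^{\ssem}=\bar\chi_1\oplus\bar\chi_2$, and after twisting by $\bar\chi_1^{-1}$ (Teichm\"uller-lifted) this becomes $1+\bar\chi$ with $\bar\chi$ odd. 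The pseudocharacter $\tr\rho$ is then a specialization of the pseudodeformation ring of Theorem~\ref{thm:zhang-unramified-pseudodeformations}, which gives the same conclusion.
\end{itemize}

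\emph{Main obstacle.} I expect the delicate step to be the characteristic-zero residually irreducible case. The issue is checking that the Pilloni--Stroh hypotheses (oddness, image conditions at $p$, distinct Frobenius eigenvalues or the appropriate $p$-distinguishedness) hold in this generality. If distinctness fails, I would first try to pass to a suitable finite twist, or use the unramified-at-$p$ condition to choose an ordinary refinement.

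A secondary point is making the twisting and lattice choices compatible with the fixed determinant $\widetilde{\detm\bar\rho}$ in Zhang's theorems. This should be routine because the relevant characters have finite image, so the twist does not affect finiteness of the image.
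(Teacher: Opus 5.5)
Your proposal is correct and follows the paper's proof essentially step for step: Pilloni--Stroh (with the residual hypothesis supplied by Lemma~\ref{lem:cyclotomic-image}) and Theorem~\ref{thm:zhang-full} in characteristic zero, and Zhang's finite (pseudo)deformation rings combined with Lemma~\ref{lem:finite-traces} in characteristic $p$. The differences are minor. In characteristic $p$ the paper needs no determinant twist: the torsion units of $\mathcal O_F$ lie in the constant field $k$, so Lemma~\ref{lem:arithmetic-finiteness} already gives $\detm\rho=\widetilde{\detm\bar\rho}$. In characteristic zero the paper invokes Pilloni--Stroh directly, with no $p$-distinguishedness fallback, and separately checks their extra condition at $p=5$; you should add that check.
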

\begin{proof}
Since $G_{\Q,S}$ is compact, we may conjugate $\rho$ so that
its image lies in $\GL_2(\mathcal O_F)$, where $\mathcal O_F$
is the ring of integers of $F$. By Lemma \ref{lem:quadratic-solvable}, we may assume that $\rho$ is absolutely irreducible. Let $\bar\rho$ denote its reduction modulo the maximal ideal of $\mathcal O_F$.

Suppose first that $F$ has characteristic zero. Since 
the representation $\rho$ is unramified at $p$, $\rho|_{G_{\Q_p}}$ is Hodge--Tate with weights $\{0,0\}$ where we view $\rho$ as a representation of $G_{\Q}$. If $\bar\rho$ is absolutely
irreducible, then Lemma~\ref{lem:cyclotomic-image} implies that $\bar{\rho}$ satisfies the residual hypothesis in \cite[Theorem~0.2]{PilloniStroh16}.  Its additional
condition at $p=5$ also holds because $[\Q(\zeta_5):\Q]=4$. Then \cite[Corollary 2.2.3]{PilloniStroh16} shows that $\rho$ has finite image.  Otherwise, over a finite residue-field extension, we have
\[
 \bar\rho^{\ssem}=\bar\psi_1\oplus\bar\psi_2.
\]
Since $\rho$ is odd, we have $\bar\psi_1\neq \bar\psi_2 $. Since $\rho$ is unramified at $p$, $\rho|_{G_{\Q_p}}$ is reducible
over $\overline\Q_p$. By Theorem~\ref{thm:zhang-full}, $\rho$ has finite image.

Now suppose that $F$ has characteristic $p$, with residue field
$k$. Since finite-order units of $\mathcal O_F$ lie in the
constant field $k$, Lemma~\ref{lem:arithmetic-finiteness} applied
to $\detm\rho$ gives $\detm\rho=\widetilde{\detm\bar\rho}$.

If $\bar\rho$ is absolutely irreducible, let $R$ be its
universal deformation ring with determinant
$\widetilde{\detm\bar\rho}$, and let $T^{\mathrm{univ}}$ be the
trace of the universal deformation. Otherwise, after a finite
extension of $F$ and a finite-order twist, we may assume that
\[
 \bar\rho^{\ssem}=1\oplus\bar\chi,
 \qquad \detm\rho=\widetilde\chi,
\]
where $\bar\chi$ is odd. Let $R$ be the corresponding universal
pseudodeformation ring and $T^{\mathrm{univ}}$ its universal
trace. By Theorems~\ref{thm:zhang-unramified-deformations}
and~\ref{thm:zhang-unramified-pseudodeformations}, respectively,
$R$ is finite over $W(k)$. In either case, the universal property
gives a continuous local map $\varphi:R\to\mathcal O_F$ such that
$\tr\rho=\varphi\circ T^{\mathrm{univ}}$. Since
$\operatorname{char}F=p$, this map factors through the finite
ring $R/pR$. Thus the set
$\mathcal T=\{\tr\rho(g):g\in G_{\Q,S}\}$ is finite. Since $p>2$, our claim follows from Lemma~\ref{lem:finite-traces}.
\end{proof}

We recall the following result from the author's earlier work
\cite[proof of Theorem~3.9]{Luo26}.

\begin{lemma}\label{lem:fixed-specialization}
Let $G$ be a topologically finitely generated profinite group and
let $A$ be a complete Noetherian commutative local ring with finite
residue field of characteristic $p$. Let $n\geq1$ be a fixed positive
integer and let $\rho:G\to\GL_n(A)$ be a continuous representation.
Suppose that, for any non-Archimedean local field $F$ with residue
field of characteristic $p$ and any continuous local homomorphism
$A\to\mathcal O_F$, the representation induced by $\rho$ over
$\mathcal O_F$ has finite image, where $\mathcal O_F$ denotes the
ring of integers of $F$. Then the image of $\rho$ contains a closed nilpotent subgroup of finite index.
\end{lemma}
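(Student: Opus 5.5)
The plan is to reduce to the minimal primes of $A$ and then to a finite-dimensional linear group in characteristic $p$ or $0$, using the hypothesis at every $\mathcal O_F$-point. Replace $A$ by the closure of the $\mathbf Z_p$-subalgebra generated by the matrix entries of $\rho(G)$. Since $G$ is topologically finitely generated, this is again complete Noetherian local, and $\rho$ still lands in it. Let $\mathfrak p_1,\dots,\mathfrak p_r$ be the minimal primes of $A$ and let $N=\sqrt{0}$ be the nilradical. The kernel of $\GL_n(A)\to\GL_n(A/N)$ is $1+M_n(N)$, which is nilpotent because $N$ is nilpotent. So if the image in $\GL_n(A/N)$ has a closed nilpotent subgroup $H$ of finite index, then the preimage of $H$ in $\operatorname{Im}\rho$ is an extension of $H$ by a nilpotent normal subgroup. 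Such an extension need not be nilpotent, and I will handle this point at the end.

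For the reduced case, $A/N$ embeds into $\prod_i A/\mathfrak p_i$. It therefore suffices to treat each domain $B=A/\mathfrak p_i$, since a finite product of groups each virtually nilpotent is virtually nilpotent. For a complete Noetherian local domain $B$ with finite residue field, every maximal ideal of $B[1/p]$ (in characteristic zero), or every height-$(\dim B-1)$ prime with one-dimensional quotient (in general), gives a local map $B\to\mathcal O_F$ with $F$ a local field. The intersection of the kernels of all such maps is zero, by a Jacobson-type property of $B$ or $B[1/p]$.

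By hypothesis, each specialization $\rho_F$ has finite image. The key step is to get a uniform bound. I would use a Jordan-type theorem: a finite subgroup of $\GL_n$ over a field of characteristic $\ell\ge0$ has an abelian normal subgroup of index bounded by $J(n)$ in characteristic zero. In characteristic $p$ this fails, so I would instead exploit that the image is pro-finite with a fixed residue field. Passing to the open normal subgroup $G_1=\rho^{-1}(1+\mathfrak m_A M_n(A))$, which is pro-$p$, each $\rho_F(G_1)$ is a finite $p$-subgroup of $\GL_n(\mathcal O_F)$ reducing to $1$. Over a field, a finite $p$-group of such matrices is conjugate into upper unitriangular matrices in characteristic $p$. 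In characteristic zero such a group is trivial for large $p$ relative to ramification, and in general has bounded nilpotency class $c(n)$. Therefore every iterated commutator of length $c(n)+1$ of elements of $\rho(G_1)$ becomes trivial under every specialization. Since the kernels of specializations intersect to zero in $B$, such a commutator is trivial in $\GL_n(B)$. Hence $\rho(G_1)$ is nilpotent of class at most $c(n)$ in each $\GL_n(A/\mathfrak p_i)$, and so in $\GL_n(A/N)$.

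To avoid the extension issue, I would run the same commutator-identity argument directly in $\GL_n(A)$ for $\rho(G_1)$. Elements of $1+M_n(N)$ contribute commutators lying in $1+M_n(N^j)$ with increasing $j$ at each nesting, so a commutator of length $(c(n)+1)\cdot e$, where $N^e=0$, is trivial. Thus $\rho(G_1)$ is a closed nilpotent subgroup of finite index.

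I expect the main obstacle to be the bounded-class statement for finite $p$-subgroups of $\ker(\GL_n(\mathcal O_F)\to\GL_n(k_F))$ uniformly over all $F$, especially in characteristic zero with unbounded ramification. There I would first try conjugating into a Borel subgroup after reduction: a finite $p$-group acting unipotently on $k_F^n$ stabilizes a flag mod $\mathfrak m_F$, and I would use a filtration argument to get class at most a function of $n$. If that fails, I would fall back on Zassenhaus-type class bounds for unipotent groups.
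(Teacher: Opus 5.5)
The paper does not prove this lemma; it cites \cite[proof of Theorem~3.9]{Luo26}. So your argument has to stand on its own, and it has two genuine gaps.

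\textbf{The uniform class bound.} The bound $c(n)$ is false in characteristic $0$ as soon as $n\ge p$. The monomial group $\mu_{p^m}\wr C_p\subset\GL_p(\Q_p(\zeta_{p^m}))$ has nilpotency class at least $m(p-1)$. Indeed, on the quotient $\Z_p[\zeta_p]/p^m$ of its base group $(\Z/p^m)[C_p]$, commuting $k$ times with the generator of $C_p$ is multiplication by $(\zeta_p-1)^k$, which is nonzero for $k<m(p-1)$.

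Moreover, every finite $p$-subgroup of $\GL_n(F)$ lands in a first congruence subgroup after a ramified extension. It fixes a lattice and is upper unitriangular modulo $\mathfrak m_F$ in a suitable basis. Conjugating by $\operatorname{diag}(1,\pi,\dots,\pi^{n-1})$, with $\pi^n=\varpi_F$, puts it into $1+\pi M_n(\mathcal O_{F(\pi)})$.

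Your fallbacks cannot help: these groups consist of semisimple elements, and the pro-$p$ Iwahori subgroup is not nilpotent. For $n<p$ the bound does hold, since $p$-subgroups of $\GL_n$ in characteristic $0$ are then abelian. In general, Jordan's theorem gives a uniform bound on derived length, but then your method only yields virtual solvability. That is enough for the paper's application via Lemma~\ref{lem:arithmetic-finiteness}, but not for the lemma as stated.

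\textbf{Passing from $A/N$ to $A$.} This step fails even when a class bound is available. Commutators with elements outside $1+M_n(N)$ do not raise the $N$-adic depth.

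Here is a counterexample. Let $A=\Z_p[\zeta_p][\epsilon]/(\epsilon^2)$. Let $G\subset\GL_2(A)$ be topologically generated by $\gamma=\operatorname{diag}(\zeta_p,1)$ and $u=1+\epsilon E_{12}$, where $E_{12}$ is a matrix unit, and let $\rho$ be the inclusion. Every specialization kills $\epsilon$ and has image of order at most $p$, so the hypotheses hold. Also $G_1=G$, and the image modulo $N$ is abelian. Yet the commutator $[\gamma,[\gamma,\dots,[\gamma,u]]]$ with $k$ copies of $\gamma$ equals $1+(\zeta_p-1)^k\epsilon E_{12}\neq1$ for every $k$. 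So $\rho(G_1)$ is not nilpotent.

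The nilpotent subgroup has to be $\rho(G)\cap(1+M_n(N))$. It has finite index only if the image of $\rho$ in every $\GL_n(A/\mathfrak p_i)$ is \emph{finite}, and this finiteness is the missing idea.

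\textbf{Supplying the finiteness.}
\begin{itemize}
\item In characteristic $p$, your unipotence argument supplies it. $\rho(G_1)$ modulo $\mathfrak p_i$ is unipotent, hence nilpotent of bounded exponent. It is therefore finite, being a topologically finitely generated pro-$p$ group.
\item In characteristic $0$, replace the class bound by an order bound on a smaller but still separating family of specializations. Write $B=A/\mathfrak p_i$ as a finite module over $\Lambda=W(k)[[x_1,\dots,x_d]]$. Use only the specializations lying over the points $x\mapsto\alpha\in(pW(k))^d$.
  \begin{itemize}
  \item Their coefficient fields have bounded degree over $\Q_p$, so their images have order at most some fixed $M$.
  \item Their kernels still meet in $0$. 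If a nonzero $b$ died at all of them, the constant term of its minimal polynomial over $\Lambda$ (which lies in $bB$) would be a nonzero power series vanishing on all of $(pW(k))^d$.
  \item Hence the intersection of the finitely many open normal subgroups of $\rho(G)$ of index at most $M$ maps trivially to $\GL_n(B)$, so the image in $\GL_n(B)$ is finite.
  \end{itemize}
\end{itemize}

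\textbf{A smaller point.} Drop your initial replacement of $A$ by a closed subring. The hypothesis only concerns local maps out of $A$, and local maps out of a subring need not extend to $A$.
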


\begin{proof}[Proof of Theorem~\ref{thm:combined-intro}]
For every continuous local map $A\to\mathcal O_F$ to the ring of
integers of a local field of residue characteristic $p$, the
specialization of $\rho$ remains odd and unramified at $p$.
Its image is finite by Proposition~\ref{prop:odd-local-fields}.
The image of $\rho$ is a topologically finitely generated profinite
group by Lemma~\ref{lem:arithmetic-finiteness}. Applying
Lemma~\ref{lem:fixed-specialization} to the inclusion
$\rho(G_{\Q,S})\hookrightarrow\GL_2(A)$ shows that this image is
virtually nilpotent. Then our claim follows from Lemma~\ref{lem:arithmetic-finiteness}.
\end{proof}

\section{Representations of imaginary quadratic
\texorpdfstring{pro-$p$}{pro-p} Galois groups}
\label{sec:quadratic}
Throughout this section and Section~\ref{sec:mcleman}, let $p>2$
and let $K/\Q$ be imaginary quadratic. Fix a complex conjugation
$c\in G_\Q$, and write $\varepsilon_K:G_\Q\to\{\pm1\}$ for the
quadratic character associated with $K/\Q$.

\begin{proposition}\label{prop:quadratic-descent}
Let $p>2$, let $K/\Q$ be imaginary quadratic, and let $S$ be a
finite set of primes of $K$ containing $S_\infty(K)$ and no prime
above $p$. Let $E/\Q_p$ be a finite extension. Suppose that
\[
 \rho:G_K\longrightarrow\GL_2(E)
\]
is a continuous absolutely irreducible representation, which is unramified outside $S$. Suppose that the image of $\rho$ is a pro-$p$ group and there exists $A\in\GL_2(E)$ satisfying
\[
 A^2=I,\qquad \detm A=-1,\qquad
 A\rho(g)A^{-1}=\rho(cgc^{-1})\quad\forall g\in G_K.
\]
Then $\rho$ has finite image.
\end{proposition}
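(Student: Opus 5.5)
The plan is to extend $\rho$ to an odd representation of $G_\Q$ and then apply Theorem~\ref{thm:combined-intro}, or more precisely Proposition~\ref{prop:odd-local-fields}. Define $\tilde\rho:G_\Q\to\GL_2(E)$ by
\[
 \tilde\rho(g)=\rho(g)\ (g\in G_K),\qquad \tilde\rho(gc)=\rho(g)A\ (g\in G_K).
\]
This is the standard extension of a representation of an index-two subgroup, using an intertwiner whose square matches $\rho(c^2)=\rho(1)=I$. The homomorphism property reduces to three checks. First, $A\rho(g)A^{-1}=\rho(cgc^{-1})$ handles products $c\cdot g$. Second, $c^2=1$ together with $A^2=I$ handles $(gc)(hc)=g(chc^{-1})c^2$. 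Third, one needs $\tilde\rho(gc)\tilde\rho(hc)=\rho(g)A\rho(h)A=\rho(g)\rho(chc^{-1})A^2=\rho(g\,chc^{-1})$, which is exactly what is required. Continuity is clear. Since $\tilde\rho(c)=A$ has determinant $-1$, $\tilde\rho$ is odd.

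Next we check ramification. Let $S_0$ be the set of rational primes below primes in $S$, together with $\infty$ and the primes ramified in $K/\Q$. Then $\tilde\rho$ factors through $G_{\Q,S_0}$. The delicate point is the primes ramified in $K$. If $p$ does not divide $\operatorname{disc}(K)$, then $p\notin S_0$. If $p$ does ramify in $K$, then $\tilde\rho$ may be ramified at $p$, and this is where the pro-$p$ hypothesis enters. The inertia group $I_p\cap G_K$ acts through $\rho$, which is unramified at $p$ since $S$ contains no prime above $p$. So $\tilde\rho(I_p)$ has order at most $2$, generated by the image of an element of $I_p\setminus G_K$.

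To deal with this, I would twist. The twist $\tilde\rho\otimes\varepsilon_K$ still restricts to $\rho$ on $G_K$, and it is still odd because the determinant changes by $\varepsilon_K^2=1$. One of $\tilde\rho$ and $\tilde\rho\otimes\varepsilon_K$ should be unramified at $p$. To see this, let $\sigma\in I_p\setminus G_K$ be such that its image generates the tame quotient, which has order $2$ modulo $I_p\cap G_K$. Then $\tilde\rho(\sigma)$ is an involution commuting with $\tilde\rho(I_p\cap G_K)=1$, so it is $\pm I$. It cannot be a non-scalar involution, because $I_p$ maps through a cyclic group of order $2$ here. We need $\det\tilde\rho(\sigma)=1$ for it to be scalar. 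This is the main obstacle, and it is where the pro-$p$ condition is used. Since $\operatorname{Im}\rho$ is pro-$p$ with $p$ odd, $\det\tilde\rho|_{G_K}$ has pro-$p$ image. So $\det\tilde\rho$ is $\varepsilon_K^a$ times a character of odd ($p$-power) order. Its restriction to $I_p$ has order dividing $2$, so it equals $\varepsilon_K^a|_{I_p}$. The matrix $\tilde\rho(\sigma)$ is an involution with determinant $\varepsilon_K(\sigma)^a$. If $a=0$, then $\tilde\rho(\sigma)=\pm I$, and twisting by $\varepsilon_K$ when it equals $-I$ kills the ramification. If $a=1$, then $\tilde\rho(\sigma)$ is non-scalar. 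In that case I would instead first twist $\rho$ by a suitable finite-order character of $p$-power order to rearrange the determinant, and compare with the parity given by $\det A=-1$. I expect this bookkeeping to be the fiddly step.

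Once an odd extension $\tilde\rho'$ unramified outside a finite set not containing $p$ is obtained, Proposition~\ref{prop:odd-local-fields} applied over $F=E$ shows that $\tilde\rho'$ has finite image. Hence $\rho=\tilde\rho'|_{G_K}$ has finite image. Absolute irreducibility is not needed for this final step; it only guarantees that $A$ is unique up to sign, which justifies the freedom to twist.
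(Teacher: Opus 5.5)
There is a genuine gap, and it sits in the step you deferred as bookkeeping. The case $a=1$ is not exceptional: it is the only case, and no twist repairs it. Write $\det\widetilde\rho=\varepsilon_K^a\mu$ as you do, with $\mu$ of odd order. Since $c^2=1$ we get $\mu(c)=1$, so $\det A=-1$ forces $a=1$. Now suppose $p\mid\operatorname{disc}(K)$ and take $\sigma\in I_p\setminus G_K$. Then $\widetilde\rho(\sigma)^2=\widetilde\rho(\sigma^2)=I$, because $\rho$ kills $I_p\cap G_K$. Moreover $\mu|_{I_p}$ factors through a group of order $2$ yet has odd order, so $\det\widetilde\rho(\sigma)=\varepsilon_K(\sigma)=-1$. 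Hence $\widetilde\rho(\sigma)$ is a non-scalar involution, and $\widetilde\rho$ is genuinely ramified at $p$. Your intermediate claims are false: an involution commuting with the trivial group need not be $\pm I$, and a map through a cyclic group of order $2$ can certainly hit a non-scalar involution.

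Twisting $\widetilde\rho$ by any character of $G_\Q$, including $\varepsilon_K$, only multiplies $\widetilde\rho(\sigma)$ by a scalar, so it stays non-scalar. The same determinant computation applies to every odd representation $r$ of $G_\Q$ whose restriction to $G_K$ has pro-$p$ image. Such an $r$ has $\det r=\varepsilon_K\mu'$ with $\mu'$ of pro-$p$ image, so $\det r|_{I_p}=\varepsilon_K|_{I_p}\neq 1$. Twisting $\rho$ by $p$-power-order characters therefore cannot help either. So Proposition~\ref{prop:odd-local-fields}, which needs $p\notin S$, is unavailable whenever $p\mid\operatorname{disc}(K)$, and your argument only covers $p\nmid\operatorname{disc}(K)$.

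The missing idea is that you never need to remove the ramification at $p$. Theorem~\ref{thm:zhang-full} asks only that $\widetilde\rho|_{G_{\Q_p}}$ be Hodge--Tate with weights $\{0,0\}$ and reducible. Both follow from $|\widetilde\rho(I_p)|\le 2$:
\begin{itemize}
\item The restriction becomes unramified over a finite extension of $\Q_p$, which gives the Hodge--Tate condition.
\item The normal subgroup $\widetilde\rho(I_p)$ has order at most $2$, so it is central in the local image. If it contains a non-scalar element, that element's eigenlines are $G_{\Q_p}$-stable. If inertia acts by scalars, an eigenline of a Frobenius lift is $G_{\Q_p}$-stable.
\end{itemize}
The residual hypothesis comes from the pro-$p$ assumption. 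The residual image of $G_K$ is a normal $p$-subgroup of the residual image of $G_\Q$, so it acts trivially on every Jordan--H\"older factor. Since the determinant at $c$ is $-1$, this gives $\overline{\widetilde\rho}^{\ssem}\simeq\one\oplus\bar\varepsilon_K$ with distinct characters. Theorem~\ref{thm:zhang-full} then gives finiteness of $\widetilde\rho(G_\Q)$ directly; this is the paper's argument. Note also that the residual representation is always reducible in this setting, so the Pilloni--Stroh branch of Proposition~\ref{prop:odd-local-fields} is never relevant here.
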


\begin{proof}
The actual involution $c$ splits $G_{\Q}\to\Gal(K/\Q)$, and hence
\[
 G_{\Q}=G_K\rtimes\langle c\rangle.
\]
Every element of $G_{\Q}$ has a unique expression $gc^\epsilon$, with $g\in G_K$ and $\epsilon\in\{0,1\}$. Define
\[
 \widetilde\rho(gc^\epsilon):=\rho(g)A^\epsilon.
\]
The displayed relations ensure multiplicativity. Since the two cosets of $G_K$ are open and closed, $\widetilde\rho$ is continuous. Since the restriction to $G_K$ is absolutely irreducible, $\widetilde\rho$ is absolutely irreducible. It is odd because $\det\widetilde\rho(c)=\detm A=-1$.

Note that the representation $\widetilde\rho$ is unramified outside the rational
primes below $S$ and those dividing $\operatorname{disc}(K)$. Moreover, $I_p\cap G_K$ is the inertia group at a prime of $K$ above $p$
and is killed by $\rho$, since $S$ contains no prime above $p$. Hence
\[
 |\widetilde\rho(I_p)|\leq2.
\]
After a finite local extension, the representation is unramified, and hence $\widetilde\rho|_{G_{\Q_p}}$ is Hodge--Tate with weights $\{0,0\}$. We remark that one may also use Sen's finite-inertia criterion in \cite[Corollary to Theorem~11]{Sen80}.

We also claim that $\widetilde\rho|_{G_{\Q_p}}$ is reducible over $\overline{\Q}_p$. The normal subgroup $J=\widetilde\rho(I_p)$ has order at most two and is therefore central in the local image. If $J$ contains a nonscalar matrix, its two eigenspaces are invariant lines for the local image. Otherwise inertia acts by scalars. Choose a lift $F\in G_{\Q_p}$ of a topological generator of $G_{\Q_p}/I_p\simeq\widehat{\Z}$ and an eigenline of $\widetilde\rho(F)$ over a finite coefficient extension. That line is invariant under inertia and $F$, and hence under all of $G_{\Q_p}$ by continuity. This proves the claim.

Choose a lattice stable under the full compact image $\widetilde\rho(G_{\Q})$. Since $\rho(G_K)$ is pro-$p$, its residual image is a finite $p$-group, normal in the residual image of $G_{\Q}$. Note that a normal $p$-subgroup acts trivially on every simple module in characteristic $p$. It follows that every composition factor of $\overline{\widetilde\rho}$ factors through $\Gal(K/\Q)$. Since $p$ is odd, the only such simple characters are $\one$ and $\bar\varepsilon_K$. Since the determinant of $ \overline{\widetilde\rho}^{\ssem}$ at $c$ is $-1$, we have 
\[
 \overline{\widetilde\rho}^{\ssem}\simeq\one\oplus\bar\varepsilon_K.
\]
Now, by Theorem~\ref{thm:zhang-full}, $\widetilde\rho(G_{\Q})$ is finite, and hence $\rho(G_K)$ is finite. This completes the proof.
\end{proof}

\begin{proposition}
\label{prop:quadratic-field-csd}
Let $S$ be a finite set of primes of $K$ containing $S_\infty(K)$
and no prime above $p$. Suppose that at most one prime $\ell\in T_K(S)$ is congruent
to $1$ modulo $p$, and let $F/\Q_p$ be finite.  Then every continuous conjugate self-dual representation $\rho:G_{K,S}(p)\to\GL_2(F)$ has finite image.
\end{proposition}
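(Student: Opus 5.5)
Let $\rho:G_{K,S}(p)\to\GL_2(F)$ be continuous and conjugate self-dual, and write $\rho_K=\rho\circ q_S$. The plan is to reduce to Proposition~\ref{prop:quadratic-descent}. That requires producing an absolutely irreducible representation with pro-$p$ image, together with an involution $A$ of determinant $-1$ intertwining $\rho_K$ with $\rho_K^c$.

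\emph{Reducible case.} If $\rho$ is not absolutely irreducible, Lemma~\ref{lem:quadratic-solvable} gives finite image, so assume $\rho$ absolutely irreducible.

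\emph{The determinant.} Put $\delta=\detm\rho_K$. This is a character of $G_{K,S}(p)^{\ab}$, a finitely generated pro-$p$ abelian group, unramified outside $S$ and at $p$. Conjugate self-duality gives $\delta^c=\delta^{-1}$, so $\delta$ is an anticyclotomic character. The hypothesis on $T_K(S)$ is meant to force $\delta$ to have finite order. The $\Z_p$-rank of the anticyclotomic part of $G_{K,S}(p)^{\ab}$ is governed by $\Z_p$-extensions unramified outside primes not above $p$, and there are none: any character of a ray class group of conductor prime to $p$ has finite $p$-part. So $\delta$ is automatically of finite $p$-power order, which already follows from Lemma~\ref{lem:arithmetic-finiteness}.

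\emph{Twisting and the main obstacle.} Since $p$ is odd, choose a finite-order character $\mu$ with $\mu^2=\delta^{-1}$, taking the unique $p$-power-order square root. Then $\rho'=\rho_K\otimes\mu$ has trivial determinant and pro-$p$ image, and satisfies $\rho'^c\simeq\rho'^\vee$ provided $\mu^c=\mu^{-1}$, which holds by uniqueness of square roots in a $p$-group. For $\SL_2$ we have $\rho'^\vee\simeq\rho'$, so $\rho'^c\simeq\rho'$, and Schur's lemma gives $B$ with $B\rho'(g)B^{-1}=\rho'(cgc^{-1})$ and $B^2$ scalar.

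The main obstacle is normalizing $B$ so that $A^2=I$ and $\detm A=-1$. Rescaling gives $B^2=I$, so $B$ is $\pm I$ or has eigenvalues $\{1,-1\}$. If $B=\pm I$, then $\rho'$ factors through the $c$-coinvariants, i.e.\ through a group on which $c$ acts trivially. Then $\rho'$ extends to $G_\Q$ with $c\mapsto\pm I$. In this case I would show directly that the image is finite: the image is pro-$p$, and the pro-$p$ part of $G_{K,S}(p)$ on which $c$ acts trivially comes from $G_{\Q,T}(p)$ for $T=T_K(S)\cup\{\infty\}$ together with primes inert or ramified in $K$.

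Here I expect the congruence hypothesis to enter. A pro-$p$ extension of $\Q$ unramified outside $T$ is generated by tame inertia at primes $\ell\equiv1\pmod p$. With at most one such prime, $G_{\Q,T}(p)$ is procyclic, so the image is abelian, hence not absolutely irreducible; this contradiction rules out the case. Otherwise $\detm B=-1$ and we may take $A=B$.

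\emph{Conclusion.} Proposition~\ref{prop:quadratic-descent} (with $E$ a finite extension containing the needed entries) shows that $\rho'$ has finite image. Since $\mu$ has finite order, $\rho$ has finite image too.
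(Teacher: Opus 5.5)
Your proposal is correct and follows the paper's proof essentially step for step: reduce to the absolutely irreducible case, twist by the odd-order anticyclotomic square root of the determinant to land in $\SL_2$, normalize the Schur intertwiner to an involution, apply Proposition~\ref{prop:quadratic-descent} when it is non-scalar, and in the scalar case descend to $G_\Q$ (sending $c\mapsto I$, so the image stays pro-$p$) and use that $G_{\Q,T_K(S)\cup\{\infty\}}(p)$ is procyclic. The one point to make explicit is why the descended representation is unramified at every $\ell\notin T_K(S)$: some $v\mid\ell$ lies outside $S$, the $c$-invariance $\rho'(cgc^{-1})=\rho'(g)$ then kills inertia at all primes of $K$ above $\ell$ (the paper instead gets this up front from the $c$-stability of $\ker\rho_K$), and the remaining inertia image has order at most $2$ in a pro-$p$ group, hence is trivial---so no extra inert or ramified primes need to be added to $T$.
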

\begin{proof}
By Lemma~\ref{lem:quadratic-solvable}, we may assume that $\rho$ is
absolutely irreducible.  After a finite extension of $F$, the
conjugate self-duality isomorphism is defined over $F$.  Thus
$\ker \rho_K$ is stable under $c$, because a representation and its
dual have the same kernel. It follows that $\rho_{K} $ is unramified outside
\[
 S_0:=S_\infty(K)\cup\{v:\ v\mid\ell\text{ for some }\ell\in T_K(S)\}.
\]

The determinant $\delta=\detm \rho$ has finite image of odd order $N$,
because it factors through the finite abelian $p$-group
$G_{K,S}(p)^{\ab}$.  Conjugate self-duality gives
$\delta_K^c=\delta_K^{-1}$.  Put $\eta=\delta^{(N+1)/2}$ and
$\rho_0=\eta^{-1}\rho$.  Then $\eta^2=\delta$,
$\eta_K^c=\eta_K^{-1}$, and $\detm \rho_0=1$. Moreover, we have
$\ker \rho_K\subseteq\ker\eta_K$, and hence the inflation $\rho_{0,K}$ is
unramified outside $S_0$.  The canonical self-duality of a
two-dimensional determinant-one representation gives
\[
 \rho_{0,K}^c\simeq \rho_{0,K}^\vee\simeq \rho_{0,K}.
\]
After taking a finite extension of the coefficient field $F$, still
denoted by $F$, we can choose $B\in\GL_2(F)$ such that $ B\rho_{0,K}(g)B^{-1}=\rho_{0,K}(cgc^{-1})$. Since $c^{2}=1$, Schur's lemma implies that $B^2=\lambda I$. Put $A:=\frac{B}{\sqrt{\lambda}}$. Then we have an involution $A\in\GL_2(F)$ satisfying
$A\rho_{0,K}(g)A^{-1}=\rho_{0,K}(cgc^{-1})$ for every $g\in G_K$.

If $A$ is not scalar, then its eigenvalues are $1$ and $-1$. In particular, we have $\detm A=-1$. Applying Proposition~\ref{prop:quadratic-descent}
to $\rho_{0,K}$ with ramification set $S_0$, we obtain that $\rho_0$ has finite image.

Suppose that $A$ is scalar.  Then $\rho_{0,K}(cgc^{-1})=\rho_{0,K}(g)$,
and the formula
\[
 \widetilde \rho_0(gc^e)=\rho_{0,K}(g)
 \qquad(g\in G_K,\ e\in\{0,1\})
\]
defines a continuous representation of $G_\Q$ whose image is a pro-$p$ group. Put $T:=T_K(S)$.  For $\ell\notin T$, the restriction of
$\rho_{0,K}$ to inertia at every prime of $K$ above $\ell$ is
trivial.  Therefore $|\widetilde \rho_0(I_\ell)|\leq2$.  As $p$ is
odd and the image is pro-$p$, this inertia image is trivial. Hence $\widetilde \rho_0$ factors through $G_{\Q,T\cup\{\infty\}}(p)$. By our assumption, $G_{\Q,T\cup\{\infty\}}(p)$ is a finite cyclic group. Thus $\rho_0$ has finite image also in the scalar case.
Finally, since $\eta$ has finite image, $\rho$ has finite image. This finishes the proof of our assertion.
\end{proof}

Write $\Phi(G)=\overline{G^p[G,G]}$ for the Frattini subgroup of a pro-$p$ group $G$. The following elementary construction provides the odd involution needed for descent.

\begin{lemma}\label{lem:two-generator-involution}
	Let $p$ be odd and let $E/\Q_p$ be a finite extension. Let $G$ be a pro-$p$ group with $\dim_{\F_p}G/\Phi(G)=2$ and an involution $\alpha$ acting by $-1$ on $G/\Phi(G)$. If $\rho:G\to\SL_2(E)$ is a continuous absolutely irreducible representation, then, after a finite extension of $E$, there is a matrix $A\in\GL_2(E)$ such that
	\[
	A^2=I,\qquad\det A=-1,\qquad A\rho(g)A^{-1}=\rho(\alpha(g))
	\quad(\forall g\in G).
	\]
\end{lemma}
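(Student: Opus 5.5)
The plan is to first normalize the generators so that $\alpha$ inverts them, then show that $\rho\circ\alpha\simeq\rho$ by comparing traces, and finally normalize the resulting intertwiner into an involution of determinant $-1$.

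\emph{Step 1: generators inverted by $\alpha$.} Let $x_0,y_0\in G$ be topological generators of $G$. Put $x:=x_0\,\alpha(x_0)^{-1}$ and $y:=y_0\,\alpha(y_0)^{-1}$. Since $\alpha$ is an involution,
\[
\alpha(x)=\alpha(x_0)\,x_0^{-1}=x^{-1},
\]
and likewise $\alpha(y)=y^{-1}$. Modulo $\Phi(G)$ we have $\alpha(x_0)\equiv x_0^{-1}$, so $x\equiv x_0^2$ and $y\equiv y_0^2$. Because $p$ is odd, squaring is an automorphism of $G/\Phi(G)$. Hence $x,y$ again span $G/\Phi(G)$, and by the Burnside basis theorem they topologically generate $G$. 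Set $X=\rho(x)$ and $Y=\rho(y)$, both in $\SL_2(E)$.

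\emph{Step 2: $\rho\circ\alpha\simeq\rho$.} The abstract subgroup generated by $x,y$ is dense in $G$, so it suffices to compare traces on words $w(x,y)$. We have $\rho(\alpha(w(x,y)))=w(X^{-1},Y^{-1})$.

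By the Fricke--Vogt trace identities in $\SL_2$, $\tr w(X,Y)$ is a fixed polynomial in $\tr X$, $\tr Y$, $\tr XY$. The same polynomial computes $\tr w(X^{-1},Y^{-1})$ from $\tr X^{-1}$, $\tr Y^{-1}$, $\tr X^{-1}Y^{-1}$. In $\SL_2$ these three traces are $\tr X$, $\tr Y$, and
\[
\tr (YX)^{-1}=\tr YX=\tr XY
\]
respectively. Hence $\tr\rho(\alpha(g))=\tr\rho(g)$ on a dense subgroup, and by continuity on all of $G$. Note that $\rho\circ\alpha$ is absolutely irreducible because $\alpha$ is an automorphism. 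Two absolutely irreducible representations with equal traces are isomorphic (Brauer--Nesbitt, characteristic $0$). After a finite extension of $E$ this gives $B\in\GL_2(E)$ with $B\rho(g)B^{-1}=\rho(\alpha(g))$ for all $g\in G$.

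\emph{Step 3: normalization of $B$.} Since $\alpha^2=\mathrm{id}$, the matrix $B^2$ commutes with $\rho(G)$. By Schur's lemma, $B^2=\lambda I$. Adjoining $\sqrt\lambda$ to $E$, we set $A:=B/\sqrt\lambda$, so that $A^2=I$.

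If $A$ were scalar, we would get $X=X^{-1}$ and $Y=Y^{-1}$, i.e.\ $X^2=Y^2=I$. But $\rho(G)$ is a pro-$p$ group with $p$ odd, so it has no elements of order $2$. Thus $X=Y=I$, and $\rho$ would be trivial, contradicting absolute irreducibility. Hence $A$ is a nonscalar involution, its eigenvalues are $1$ and $-1$, and $\det A=-1$.

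The main obstacle is Step 2, namely justifying that the trace of every word is the same polynomial in $(\tr X,\tr Y,\tr XY)$ for both substitutions. This follows by induction on word length from $\tr(UV)+\tr(UV^{-1})=\tr U\,\tr V$. One alternative avoids Fricke: write $X^{-1}=J\,{}^tX\,J^{-1}$ with $J=\begin{pmatrix}0&1\\-1&0\end{pmatrix}$. Then $w(X^{-1},Y^{-1})$ has the same trace as the word $w$ read in reverse and evaluated at $(X,Y)$. One would then still need the classical fact that reversing a word preserves $\SL_2$-traces, so I would rely on the Fricke identity route.
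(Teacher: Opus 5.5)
Your proof is correct. After the shared first step it takes a different route from the paper.

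The paper never goes through $\rho\circ\alpha\simeq\rho$. Instead it writes the intertwiner down explicitly as the additive commutator $B=XY-YX$:
\begin{itemize}
\item Cayley--Hamilton gives $BX+XB=\tr(X)B$, which for $X\in\SL_2(E)$ is equivalent to $BX=X^{-1}B$; the same holds for $Y$.
\item Absolute irreducibility forces $B$ to be invertible. A nonzero kernel of a singular $B$ would be a common invariant line of $X$ and $Y$, and $B=0$ would make $X,Y$ commute.
\item Since $\tr B=0$, one has $B^2=-\det(B)I$. So $A=B/\sqrt{-\det B}$ is an involution, and $\det A=-1$ comes for free.
\end{itemize}
This is a few lines of linear algebra. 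It avoids the Fricke--Vogt polynomial identity (whose inductive proof you only sketch, though citing it is fine), the trace criterion for isomorphism, and the Schur-lemma normalization with its separate argument that $A$ is not scalar.

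Your route buys a conceptual explanation of why an intertwiner must exist. The $\SL_2$-character of a two-generated group depends only on $(\tr X,\tr Y,\tr XY)$, and this triple is invariant under $(X,Y)\mapsto(X^{-1},Y^{-1})$. The paper's $B$ is simply a concrete realization of the isomorphism you obtain abstractly.

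One small remark on your Step 3: the pro-$p$ hypothesis is not needed to exclude a scalar $A$. If $X^2=I$ with $X\in\SL_2(E)$, then $X$ is diagonalizable with eigenvalues $\pm1$ of product $1$, so $X=\pm I$. A scalar $A$ would therefore make $\rho(G)$ consist of scalars, which already contradicts irreducibility.
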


\begin{proof}
	Lift a basis of $G/\Phi(G)$ to $y_1,y_2\in G$ and put $x_i=y_i\alpha(y_i)^{-1}$. Then we have $\alpha(x_i)=x_i^{-1}$. Since their images in the Frattini quotient are twice the chosen basis vectors, we see that $x_1,x_2$ topologically generate $G$ by the Burnside basis theorem.
	
	Set $X=\rho(x_1)$, $Y=\rho(x_2)$ and $B=XY-YX$. By the Cayley--Hamilton theorem, we have
	\[
	BX+XB=\tr(X)B,\qquad BY+YB=\tr(Y)B.
	\]
	Since $X,Y\in\SL_2(E)$, these identities are equivalent to
	\[
	BX=X^{-1}B,\qquad BY=Y^{-1}B.
	\]
	If $B$ were nonzero and singular, then its kernel would be a common invariant line for $X,Y$. If $B=0$, then the commuting pair would again have a common invariant line over an algebraic closure. Since $x_1,x_2$ topologically generate $G$, both conclusions contradict the absolute irreducibility of $\rho$. Thus $B$ is invertible. Moreover $\tr(B)=0$, whence $B^2=-\det(B)I$.
	
	We choose $s$ with $s^2=-\det B$ and put $A=s^{-1}B$. Then we have $A^2=I$, $\det A=-1$, and $A$ inverts both $X$ and $Y$ by conjugation. Thus $A\rho(g)A^{-1}=\rho(\alpha(g))$ on the generators and, by continuity, on all of $G$.
\end{proof}

\begin{corollary}
\label{cor:quadratic-field-rank2}
Let $F/\Q_p$ be finite.  If $d_p\Cl(K)=2$, then every continuous representation $\rho:G_{K,\varnothing}(p)\to\GL_2(F)$ has finite image.
\end{corollary}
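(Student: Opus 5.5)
We reduce to the absolutely irreducible, determinant-one case, obtain the involution from Lemma~\ref{lem:two-generator-involution}, and then conclude with Proposition~\ref{prop:quadratic-descent}.

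Put $G=G_{K,\varnothing}(p)$. By Lemma~\ref{lem:quadratic-solvable}, applied with $S=S_\infty(K)$ and noting that $G_{K,S_\infty(K)}(p)=G$ for imaginary $K$, we may assume $\rho$ is absolutely irreducible.

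\emph{Normalizing the determinant.} The determinant $\delta=\detm\rho$ factors through $G^{\ab}\simeq\Cl(K)[p^\infty]$, which is a finite $p$-group of odd order $N$. Put $\eta=\delta^{(N+1)/2}$, so that $\eta^2=\delta$, and set $\rho_0=\eta^{-1}\rho$. Then $\rho_0$ takes values in $\SL_2(F)$, is still absolutely irreducible, and has image a pro-$p$ group. Since $\eta$ has finite image, it suffices to show that $\rho_0$ has finite image.

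\emph{Producing the involution.} Let $\alpha$ be the automorphism of $G$ induced by conjugation by $c$; it is well defined because $K_\varnothing(p)/\Q$ is Galois. By class field theory, $G/\Phi(G)\simeq\Cl(K)/p\Cl(K)$, which has dimension $2$ by hypothesis. We need $c$ to act by $-1$ on this quotient. The key input is that $c$ acts by $-1$ on $\Cl(K)$: for any ideal $\mathfrak a$, the product $\mathfrak a\,\mathfrak a^c=N(\mathfrak a)\mathcal O_K$ is principal. Lemma~\ref{lem:two-generator-involution} therefore applies to $\rho_0:G\to\SL_2(F)$. After a finite extension of $F$, it gives $A$ with $A^2=I$, $\detm A=-1$ and $A\rho_0(g)A^{-1}=\rho_0(\alpha(g))$.

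\emph{Descent.} Inflate $\rho_0$ to $G_K$. The inflation is unramified outside $S_\infty(K)$, its image is pro-$p$, and it is absolutely irreducible. The relation with $A$ becomes $A\rho_{0,K}(g)A^{-1}=\rho_{0,K}(cgc^{-1})$, because the inflation intertwines conjugation by $c$ with $\alpha$. Proposition~\ref{prop:quadratic-descent} then shows that $\rho_{0,K}$ has finite image, and hence so does $\rho$.

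The only step requiring care is the compatibility of $\alpha$ with $c$. We must check that the Artin map identifies the action of $c$ on $G^{\ab}$ with the Galois action of $c$ on the class group, so that $\alpha$ acts by $-1$ on the Frattini quotient. Everything else is a direct application of earlier results.
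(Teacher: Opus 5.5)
Your proposal is correct and follows the paper's proof essentially step by step. The paper likewise reduces to the absolutely irreducible case, twists by the finite-order square root $\eta=\delta^{(N+1)/2}$ to land in $\SL_2$, takes the odd involution from Lemma~\ref{lem:two-generator-involution}, and descends via Proposition~\ref{prop:quadratic-descent} with $S=S_\infty(K)$. Your justification that $c$ acts by $-1$ on $G/\Phi(G)$ (via $\mathfrak a\,\mathfrak a^c=N(\mathfrak a)\mathcal O_K$ and equivariance of the Artin map) is a welcome detail that the paper simply asserts.
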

\begin{proof}
Put $G=G_{K,\varnothing}(p)$, and let $\alpha$ be the involution
induced by $c$. By class field theory,
$G/\Phi(G)\simeq\Cl(K)/p\Cl(K)$, and $\alpha$ acts by $-1$
on this two-dimensional $\F_p$-vector space.
By Lemma~\ref{lem:quadratic-solvable}, we may assume that $\rho$ is absolutely
irreducible.  As in Proposition~\ref{prop:quadratic-field-csd},
twist by the finite-image square root of $\detm \rho$ to obtain
$\rho_0:G_{K,\varnothing}(p)\to\SL_2(F)$.
Lemma~\ref{lem:two-generator-involution} supplies an odd involution
after a finite scalar extension. Applying
Proposition~\ref{prop:quadratic-descent} to the inflation of $\rho_0$
to $G_K$, with $S=S_\infty(K)$, gives finite image, and hence
$\rho$ has finite image.
\end{proof}

\begin{proof}[Proof of Theorem~\ref{thm:quadratic-padic}]
It follows from Proposition~\ref{prop:quadratic-field-csd}.
\end{proof}
\begin{proof}[Proof of Corollary~\ref{cor:rank2-padic}]
It follows from Corollary~\ref{cor:quadratic-field-rank2}.
\end{proof}

\section{McLeman's \texorpdfstring{$(3,3)$}{(3,3)}-conjecture}
\label{sec:mcleman}

For a finitely generated pro-$p$ group $F$, let $D_n(F)$ be its Zassenhaus filtration:
\[
D_n(F)=\{g\in F:g-1\in I_F^n\},
\]
where $I_F$ is the augmentation ideal of $\F_p[[F]]$.

\begin{definition}\cite[Section~2]{McL08}.\label{def:33}
	A pro-$p$ group has \emph{Zassenhaus type $(3,3)$} if it admits a minimal presentation $G=F/R$ with $F$ free pro-$p$ on two generators, such that $R$ is normally generated by two elements of $D_3(F)$ having linearly independent images in $D_3(F)/D_4(F)$.
\end{definition}

Recall that a \emph{strong Schur $\sigma$-group} is an FAb pro-$p$ group $G$ equipped with an involution $\alpha$ such that
\[
\dim_{\F_p}H^2(G,\F_p)\leq\dim_{\F_p}H^1(G,\F_p)<\infty,
\]
and $\alpha$ acts by $-1$ on both cohomology spaces. Here FAb
means that every open subgroup has finite abelianization.
We use the following embedding consequence of Pink's structural theorem.

\begin{theorem}[Pink, \cite{Pink25}, Proposition~8.1]\label{thm:pink}
	Let $p>3$. Then every infinite strong Schur $\sigma$-group of Zassenhaus type $(3,3)$ is isomorphic to an open subgroup of $\GG(\Q_p)$ for a $\Q_p$-form $\GG$ of $\PGL_2$.
\end{theorem}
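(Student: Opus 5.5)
Since this is Pink's result, quoted from \cite{Pink25}, I would not re-derive it from scratch in the paper. The plan below is the route I would take to reconstruct the argument; the main steps are to show $G$ is $p$-adic analytic of dimension three, to identify its Lie algebra as a form of $\mathfrak{sl}_2$, and to show the adjoint map is injective.

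\emph{Step 1: linear growth of the graded Lie algebra.} Let $G=F/R$ be infinite of Zassenhaus type $(3,3)$ with involution $\alpha$. Consider the graded restricted Lie algebra $\operatorname{gr}G=\bigoplus_n D_n(G)/D_{n+1}(G)$ over $\F_p$. It is a quotient of the free restricted Lie algebra on two generators $x,y$ in degree one by the ideal generated by two independent elements of degree three. Since $\alpha$ acts by $-1$ in degree one, the relators are $\alpha$-anti-invariant. The degree-three part of the free Lie algebra is spanned by $[x,[x,y]]$ and $[y,[x,y]]$, so the two relations kill all of degree three. I would then compute directly that the graded pieces have dimensions $2,1,0,\dots$ before the $p$-power operation contributes. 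From this I would aim to show that $\dim_{\F_p}D_n(G)/D_{n+1}(G)$ stays bounded (by $3$ over each $p$-power step) for $p>3$. The hypothesis $p>3$ is needed so that the $p$-th power map does not interfere in low degrees.

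\emph{Step 2: analyticity and dimension three.} By Lazard's criterion, bounded Zassenhaus quotients imply that $G$ has finite rank, hence is $p$-adic analytic. Its $\Q_p$-Lie algebra $L$ then has dimension equal to the eventual growth rate, which is $3$. The FAb property forces $L$ to be perfect, since an abelian quotient of $L$ would give an infinite abelianization of an open subgroup. The only perfect three-dimensional Lie algebras are the forms of $\mathfrak{sl}_2$, so $L\otimes\overline{\Q}_p\simeq\mathfrak{sl}_2$.

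\emph{Step 3: the adjoint embedding.} The adjoint action gives a continuous homomorphism $G\to\operatorname{Aut}(L)=\GG(\Q_p)$, where $\GG$ is the corresponding $\Q_p$-form of $\PGL_2$. Its image is open, because the differential of the adjoint map is an isomorphism onto $\operatorname{Der}(L)=L$. Its kernel $N$ is a finite normal subgroup: it is closed, and its Lie algebra is the centre of $L$, which is zero. I would show that $N=1$ by a cohomological argument. A torsion-free open normal subgroup $U$ is a Poincar\'e duality group of dimension $3$. The constraints $\dim H^2(G,\F_p)\le\dim H^1(G,\F_p)=2$, together with $\alpha$ acting by $-1$ on both spaces, should rule out a nontrivial finite normal $p$-subgroup. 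The idea is that such an $N$ would contribute extra classes to $H^2$ through the Hochschild--Serre spectral sequence, or would give an $\alpha$-fixed class.

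I expect Step 3, specifically excluding the finite kernel, to be the main obstacle. The growth computation in Step 1 is also delicate, since it is where the hypothesis $p>3$ is really used.
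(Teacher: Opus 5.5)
The paper gives no argument for this statement. It quotes Pink's Proposition~8.1 as a black box and uses it only as input for the McLeman theorems, so deferring to Pink is exactly the paper's route. Read as a proof, however, your reconstruction has a genuine gap, and it is where you suspected.

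\textbf{Steps 1 and 2 are essentially sound.} For $p\ge5$ the space $D_3(F)/D_4(F)$ is spanned by the classes of $[x,[x,y]]$ and $[y,[x,y]]$. So $\operatorname{gr}G$ is a quotient of the free restricted Lie algebra by a restricted ideal containing all Lie words of length $\ge3$. Hence $\operatorname{gr}G$ is spanned by $\bar x^{[p^k]},\bar y^{[p^k]},[\bar x,\bar y]^{[p^k]}$, and Lazard gives analyticity with $\dim G\le3$. Growth only gives this upper bound, not $\dim G=3$. It is FAb plus infiniteness that force $L$ to be a nonzero perfect Lie algebra of dimension $\le3$, hence a form of $\mathfrak{sl}_2$.

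\textbf{The gap is Step 3.} Once analyticity is known, the content of the theorem is the injectivity of $\operatorname{Ad}$. You only express the hope that $\dim H^2\le\dim H^1$ and the sign of $\alpha$ exclude the finite kernel $N$. For an arbitrary finite normal $N$ (possibly nonabelian, acting nontrivially on $H^1(N)$ and $H^2(N)$), it is not clear that a Hochschild--Serre count forces $\dim H^2(G)>2$.

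This step is really a torsion statement. For $p\ge5$, no $\Q_p$-form of $\PGL_2$ contains an element $g$ of order $p$. Such a $g$ would give $\tr\operatorname{Ad}(g)=1+\zeta_p+\zeta_p^{-1}\in\Q_p$, while $[\Q_p(\zeta_p):\Q_p]=p-1>2$. So the statement forces $G$ to be torsion-free, and conversely torsion-freeness kills the finite kernel. This is a second place where $p>3$ enters that your plan does not register.

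\textbf{A fix using your Step 1.}
\begin{enumerate}
\item In $\operatorname{gr}G$ all brackets of length $\ge3$ vanish. By Jacobson's formula the $p$-map $\pi$ is therefore $\F_p$-linear. It maps $\operatorname{gr}_{p^k}G$ onto $\operatorname{gr}_{p^{k+1}}G$ and $\operatorname{gr}_{2p^k}G$ onto $\operatorname{gr}_{2p^{k+1}}G$.
\item Hence if some $c_{p^k}(G)<2$ or $c_{2p^k}(G)<1$, the deficit persists in all higher layers.
\item In that case the Jennings--Quillen series $\prod_n\bigl((1-t^{pn})/(1-t^n)\bigr)^{c_n(G)}$ of $\operatorname{gr}\F_p[[G]]$ is bounded coefficientwise by a polynomial times $1/((1-t)(1-t^2))$ or $1/(1-t)^2$. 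Then $\dim_{\F_p}\F_p[[G]]/I_G^n=O(n^2)$, forcing $\dim G\le2$, a contradiction.
\item So every $\pi:\operatorname{gr}_nG\to\operatorname{gr}_{pn}G$ is bijective. Thus $g\in D_n(G)\setminus D_{n+1}(G)$ implies $g^{p^k}\in D_{p^kn}(G)\setminus D_{p^kn+1}(G)$ for all $k$.
\item Therefore $G$ is torsion-free, the finite kernel of $\operatorname{Ad}$ is trivial, and your Step~3 goes through.
\end{enumerate}
This route uses only $d(G)=2$, the two degree-three relators, FAb and infiniteness. It needs neither the bound on $H^2$ nor the involution.
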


For imaginary quadratic $K$, the group $G_{K,\emptyset}(p)$, with the action induced by complex conjugation, is a strong Schur $\sigma$-group; see \cite[Section~9]{Pink25}. 

\begin{lemma}\label{lem:lift}
	Let $p$ be odd, let $E/\Q_p$ be a finite extension and let $H$ be a compact pro-$p$ subgroup of $\PGL_2(E)$. Then the inclusion of $H$ lifts to a continuous injective homomorphism $s:H\to\SL_2(E)$.
\end{lemma}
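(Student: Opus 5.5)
The plan is to use the central extension $1\to\{\pm I\}\to\SL_2(E)\to\PSL_2(E)\to1$ and the fact that $p$ is odd. Three steps: first get $H$ into $\PSL_2(E)$, then lift it using the pro-$p$ structure, then check continuity and injectivity.

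\emph{Step 1: $H\subseteq\PSL_2(E)$.} The determinant induces a homomorphism
\[
\PGL_2(E)\to E^\times/E^{\times2},
\]
whose kernel is the image of $\SL_2(E)$, namely $\PSL_2(E)$. Since $E^\times/E^{\times2}$ is a finite $2$-group and $H$ is pro-$p$ with $p$ odd, the restriction of this homomorphism to $H$ is trivial. Hence $H\subseteq\PSL_2(E)$, so no extension of $E$ should even be needed.

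\emph{Step 2: the lift.} Let $\pi:\SL_2(E)\to\PSL_2(E)$ be the projection. It is a closed, open, continuous surjection with kernel $\{\pm I\}$. Set $\widetilde H:=\pi^{-1}(H)$. This is a compact subgroup of $\SL_2(E)$ and a central extension of $H$ by $\Z/2$.
\begin{itemize}
\item Write $\widetilde H=\varprojlim \widetilde H/U$ over open normal subgroups $U$ with $U\cap\{\pm I\}=1$. Such $U$ are cofinal because $\widetilde H$ is profinite and $\{\pm I\}$ is finite.
\item Each finite quotient is a central extension of a finite $p$-group by $\Z/2$. By Schur--Zassenhaus it splits uniquely as $P_U\times\Z/2$, with $P_U$ the unique Sylow $p$-subgroup.
\item Alternatively, and more cleanly, let $P\subseteq\widetilde H$ be a pro-$p$ Sylow subgroup. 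Its index in $\widetilde H$ is a supernatural number dividing $2$, and $P\cap\{\pm I\}=1$. Since $\{\pm I\}$ is normal of order $2$ and $P\{\pm I\}=\widetilde H$ by order considerations, $P\cdot\{\pm I\}=\widetilde H$.
\end{itemize}
Therefore $\pi|_P:P\to H$ is a continuous bijective homomorphism between compact Hausdorff groups, hence a topological isomorphism. Define $s:=(\pi|_P)^{-1}$, composed with the inclusion $P\hookrightarrow\SL_2(E)$.

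\emph{Step 3: properties of $s$.} The map $s$ is continuous, injective, and satisfies $\pi\circ s=\mathrm{id}_H$.

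The step needing the most care is justifying that $\pi^{-1}(H)$ splits as $P\times\{\pm I\}$ in the profinite setting. I would handle it via profinite Sylow theory: Sylow pro-$p$ subgroups exist, and here $P$ is normal, since it is the unique Sylow subgroup because $\{\pm I\}$ is central of coprime order. Alternatively, I could argue directly: for $h\in H$ with a lift $\tilde h$, exactly one of $\pm\tilde h$ lies in the pro-$p$ part, because $\tilde h^{p^n}\to$ a limit determined by the $p$-part. Either route is routine once Step 1 is in place.
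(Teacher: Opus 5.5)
Your proposal is correct and matches the paper's proof. The determinant map to the finite $2$-group $E^\times/(E^\times)^2$ is trivial on $H$, so $H$ lies in the image of $\SL_2(E)$, and the central extension $1\to\{\pm I\}\to\pi^{-1}(H)\to H\to 1$ splits because $p$ is odd. The paper cites the profinite Schur--Zassenhaus theorem for this splitting. Your Sylow pro-$p$ argument, together with the observation that a continuous bijection from a compact space to a Hausdorff space is a homeomorphism, is just that theorem worked out in this case.
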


\begin{proof}
	Note that the determinant map $\PGL_2(E)\to E^\times/(E^\times)^2$ is continuous with finite $2$-group target, and hence it is trivial on $H$. It follows that $H$ lies in the image of the natural map $q:\SL_2(E)\to\PGL_2(E)$. If we put $\widetilde{H}:=q^{-1}(H)$, then we have an exact sequence of profinite groups
	\[
	1\longrightarrow\{\pm I\}\longrightarrow\widetilde H
	\xrightarrow{q}H\longrightarrow1.
	\]
	As $p$ is odd, this sequence splits by the profinite Schur--Zassenhaus theorem \cite[Theorem~2.3.15]{RZ10}. Then a continuous section, followed by the inclusion $\widetilde H\hookrightarrow\SL_2(E)$, gives the required lift.
\end{proof}

\begin{proof}[Proof of Theorem~\ref{thm:mcleman-pgt3}]
	Assume for contradiction that $G=G_{K,\emptyset}(p)$ is infinite. By Theorem~\ref{thm:pink}, it is isomorphic to an open subgroup of $\GG(\Q_p)$ for a $\Q_p$-form $\GG$ of $\PGL_2$. After a finite extension $E/\Q_p$ splitting $\GG$, this gives a continuous faithful homomorphism $j:G\hookrightarrow\PGL_2(E)$. Since its image is compact and pro-$p$, Lemma~\ref{lem:lift} supplies a faithful lift $\rho:G\to\SL_2(E)$. By Corollary~\ref{cor:rank2-padic}, its image is finite, contradicting the infinitude of $G$.
\end{proof}

Note that the restriction $p>3$ in the proof of Theorem~\ref{thm:mcleman-pgt3} only occurs in Pink's structural theorem. Ahlqvist and Pink provide the required embedding in ten cases at $p=3$ in \cite[Proposition~9.2]{AP26}.

\begin{proof}[Proof of Theorem~\ref{thm:mcleman-p3}]
	If $G_{K,\emptyset}(3)$ were infinite, \cite[Proposition~9.2(a)]{AP26} would embed it as an open subgroup of a $\Q_3$-form of $\PGL_2$. After a finite extension, Lemma~\ref{lem:lift} would give a faithful two-dimensional representation, exactly as in the proof of Theorem~\ref{thm:mcleman-pgt3}. Then Corollary~\ref{cor:rank2-padic} implies that its image is finite, a contradiction.
\end{proof}

\begin{remark}
The classification in \cite[Section~8]{AP26} gives thirteen possible fourth Zassenhaus quotients of type $(3,3)$ at $p=3$. The remaining cases $[243,3]$, $[243,9]$, and $[243,13]$ are still open.
\end{remark}

\begingroup
\renewcommand{\addcontentsline}[3]{}
\section*{Acknowledgments}
This paper grew out of a discussion with Dr.~Xinyao Zhang.
After Dr.~Zhang shared his manuscript \cite{Zhang26}, the author
immediately recognized that \cite[Theorem~5.4.1]{Zhang26} could
be used to prove the sufficiency direction of McLeman's
$(3,3)$-conjecture for $p>3$. Having completed that proof, the
author soon realized that Zhang's results could also be used to
establish the two-dimensional odd case of Boston's unramified
Fontaine--Mazur conjecture over $\Q$, as stated in
Theorem~\ref{thm:combined-intro}. The author is especially grateful
to Dr.~Zhang for many helpful discussions and, in particular,
for adding Appendix~A.3 to his manuscript to support the
applications in this paper. The author also thanks Dr. Eric Ahlqvist
for discussions on McLeman's $(3,3)$-conjecture.

\section*{AI use statement}
During the preparation of this manuscript, the author utilized ChatGPT 6 to assist with language refinement, text proofreading, and verifying the logical consistency of mathematical proofs.

\endgroup

\bibliographystyle{amsplain}
\bibliography{2BUFM}

@article{Boston99,
  author = {Boston, N.},
  title = {Some cases of the {Fontaine--Mazur} conjecture, {II}},
  journal = {J. Number Theory},
  volume = {75},
  number = {2},
  year = {1999},
  pages = {161--169},
  doi = {10.1006/jnth.1998.2337}
}

@incollection{FM95,
  author = {Fontaine, J.-M. and Mazur, B.},
  title = {Geometric {Galois} representations},
  booktitle = {Elliptic curves, modular forms, \& {Fermat}'s last theorem
    ({Hong Kong}, 1993)},
  editor = {Coates, J. and Yau, S.-T.},
  series = {Ser. Number Theory},
  volume = {I},
  publisher = {International Press},
  address = {Cambridge, MA},
  year = {1995},
  pages = {41--78}
}

@article{Luo26,
  author = {Luo, Y.},
  title = {Remarks on the {Boston's} unramified {Fontaine--Mazur} conjecture},
  journal = {J. Number Theory},
  volume = {281},
  year = {2026},
  pages = {96--109},
  doi = {10.1016/j.jnt.2025.09.019}
}

@article{PilloniStroh16,
  author = {Pilloni, V. and Stroh, B.},
  title = {Surconvergence, ramification et modularit{\'e}},
  journal = {Ast{\'e}risque},
  volume = {382},
  year = {2016},
  pages = {195--266},
  doi = {10.24033/ast.1004}
}

@unpublished{Zhang26,
  author = {Zhang, X.},
  title = {Modular points and dimensions of {Eisenstein} deformation spaces},
  year = {2026},
  note = {September 2026 version, 70 pp.; updated version of
    \href{https://arxiv.org/abs/2512.21249}{arXiv:2512.21249},
    available at \url{https://drive.google.com/file/d/10SGKzeKrnkeK2Jc-KH40zpaTvbuRF59K/view}}
}

@article{McL08,
  author = {McLeman, C.},
  title = {{$p$}-tower groups over quadratic imaginary number fields},
  journal = {Ann. Sci. Math. Qu\'ebec},
  volume = {32}, number = {2}, year = {2008}, pages = {199--209}
}

@article{AC25,
  author = {Ahlqvist, E. and Carlson, M.},
  title = {Massey products in the \'etale cohomology of number fields},
  journal = {J. Reine Angew. Math.},
  volume = {823}, year = {2025}, pages = {61--112},
  doi = {10.1515/crelle-2025-0006}
}

@misc{Pink25,
  author = {Pink, R.},
  title = {Schur {$\sigma$}-groups of type {$(3,3)$}},
  year = {2025},
  note = {Preprint, \href{https://arxiv.org/abs/2505.05580v2}{arXiv:2505.05580v2}}
}

@book{RZ10,
  author = {Ribes, L. and Zalesskii, P.},
  title = {Profinite Groups},
  edition = {2nd},
  series = {Ergebnisse der Mathematik und ihrer Grenzgebiete. 3. Folge},
  volume = {40},
  publisher = {Springer-Verlag}, address = {Berlin}, year = {2010},
  doi = {10.1007/978-3-642-01642-4}
}

@misc{AP26,
  author = {Ahlqvist, E. and Pink, R.},
  title = {Schur {$\sigma$}-groups of type {$(3,3)$} for {$p=3$}},
  year = {2026},
  note = {Preprint, \href{https://arxiv.org/abs/2602.09889v1}{arXiv:2602.09889v1}}
}

@article{GS64,
  author = {Golod, E. S. and Shafarevich, I. R.},
  title = {On the class field tower},
  journal = {Izv. Akad. Nauk SSSR Ser. Mat.},
  volume = {28}, number = {2}, year = {1964}, pages = {261--272},
  note = {Russian}
}

@article{Sha66,
  author = {Shafarevich, I. R.},
  title = {Extensions with given points of ramification},
  journal = {Amer. Math. Soc. Transl. (2)},
  volume = {59}, year = {1966}, pages = {128--149},
  doi = {10.1090/trans2/059/07}
}

@article{KV74,
  author = {Venkov, B. B. and Koch, H.},
  title = {The {$p$}-tower of class fields for an imaginary quadratic field},
  journal = {Zap. Nau\v{c}n. Sem. Leningrad. Otdel. Mat. Inst. Steklov. (LOMI)},
  volume = {46}, year = {1974}, pages = {5--13, 140},
  note = {Russian}
}

@article{AllenCalegari14,
  author = {Allen, P. B. and Calegari, F.},
  title = {Finiteness of unramified deformation rings},
  journal = {Algebra Number Theory},
  volume = {8},
  number = {9},
  year = {2014},
  pages = {2263--2272},
  doi = {10.2140/ant.2014.8.2263}
}

@article{CalegariGeraghty18,
  author = {Calegari, F. and Geraghty, D.},
  title = {Modularity lifting beyond the {Taylor--Wiles} method},
  journal = {Invent. Math.},
  volume = {211},
  number = {1},
  year = {2018},
  pages = {297--433},
  doi = {10.1007/s00222-017-0749-x}
}

@article{Calegari18,
  author = {Calegari, F.},
  title = {Non-minimal modularity lifting in weight one},
  journal = {J. Reine Angew. Math.},
  volume = {740},
  year = {2018},
  pages = {41--62},
  doi = {10.1515/crelle-2015-0071}
}

@article{Sen80,
  author = {Sen, S.},
  title = {Continuous cohomology and {$p$}-adic {Galois} representations},
  journal = {Invent. Math.},
  volume = {62}, number = {1}, year = {1980}, pages = {89--116},
  doi = {10.1007/BF01391665}
}

@phdthesis{Luo23Thesis,
  author = {Luo, Y.},
  title = {On the Unramified {Fontaine--Mazur} Conjecture and its generalizations},
  school = {Humboldt-Universit{\"a}t zu Berlin},
  year = {2023},
  doi = {10.18452/27867}
}
\end{document}